\theoremstyle{plain}
\newtheorem{theorem}{Theorem}[section] 
\newtheorem{lemma}[theorem]{Lemma}     
\newtheorem{corollary}[theorem]{Corollary}
\newtheorem{proposition}[theorem]{Proposition}
\newcommand{\adef}{\begin{defn}}
\newcommand{\zdef}{\end{defn}}
 \newtheorem{defn}[theorem]{Definition}
\theoremstyle{definition}
\newcommand\supp{\mathop{\mathrm{supp}}\nolimits}
\newcommand{\lop}{\ensuremath{\curvearrowright}}
\theoremstyle{remark}
\newcommand{\sn}{\sum_{k=1}^n}
\newcommand{\R}{\mathbb{R}}
\newcommand{\C}{\mathbb{C}}
\newcommand{\U}{\mathscr{U}}
\newcommand{\N}{\mathbb{N}}
\newcommand{\aproof}{\begin{proof}}
\newcommand{\zproof}{\end{proof}}
\newcommand{\To}{\longrightarrow}
\title{On Disjointly singular centralizers}
\author{Jes\'us M. F. Castillo}
\address{Instituto de Matem\'aticas, Universidad de Extremadura,
Avenida de Elvas s/n, 06011 Badajoz, Spain.} \email{castillo@unex.es}
\author{Wilson Cuellar}
\address{Departamento de Matem\'atica, Instituto de Matem\'atica e
Estat\'\i stica, Universidade de S\~ao Paulo, rua do Mat\~ao 1010,
05508-090 S\~ao Paulo SP, Brazil} \email{cuellar@ime.usp.br}
\author{Valentin Ferenczi}
\address{Departamento de Matem\'atica, Instituto de Matem\'atica e
Estat\'\i stica, Universidade de S\~ao Paulo, rua do Mat\~ao 1010,
05508-090 S\~ao Paulo SP, Brazil  \\ and Equipe d'Analyse Fonctionnelle \\
Institut de Math\'ematiques de Jussieu \\
Universit\'e Pierre et Marie Curie - Paris 6 \\
Case 247, 4 place Jussieu \\
75252 Paris Cedex 05 \\
France.} \email{ferenczi@ime.usp.br}
\author{Yolanda Moreno}
\address{Instituto de Matem\'aticas, Universidad de Extremadura,
Avenida de Elvas s/n, 06011 Badajoz, Spain.} \email{ymoreno@unex.es}
\subjclass[2010]{46B42,46B70, 46E30 }
\keywords{}
\thanks{The research of the first and fourth authors has been supported in part by project MTM2016-76958-C2-1-P de MINCIN and Project IB16056 de la Junta de Extremadura, Spain.  The second  author was supported by Fapesp, grants 2016/25574-8 and 2018/18593-1. The third author was supported by CNPq grant 303034/2015-7, and by Fapesp, grants 2015/17216-1 and 2016/25574-8.
}
\begin{document}


\maketitle

\begin{abstract} We study ``disjoint" versions of the notions of trivial, locally trivial, strictly singular and super-strictly singular quasi-linear maps in the context of K\"othe function spaces. Among other results, we show: i) (locally) trivial and (locally) disjointly trivial notions coincide on reflexive spaces; ii) On non-atomic superreflexive K\"othe spaces, no centralizer is singular, although most are disjointly singular.
iii) No super singular quasi-linear maps exist between superreflexive spaces although Kalton-Peck centralizers are super disjointly singular; iv) Disjoint singularity does not imply super disjoint singularity.
\end{abstract}

\section{Introduction}

For all unexplained notation and terms, please keep reading. This paper has its roots in \cite{cfg} where the authors introduced the notion of disjointly singular centralizer on K\"othe function spaces, proved that disjoint singularity coincides with singularity on Banach spaces with unconditional basis and presented a technique to produce disjointly singular centralizers via complex interpolation.

A second equally important fact to consider is that the fundamental  Kalton-Peck map \cite{kaltpeck} is disjointly singular on $L_p$ \cite[Proposition 5.4]{cfg}, but it is not singular \cite{suakp}. In fact, as the last stroke one could wish to foster the study of disjoint singularity is the argument of Cabello \cite{cabekp} that no centralizer on $L_p$ can be singular that we extend here by showing that no centralizer can be singular. It is thus obvious that while singularity is an important notion in the domain of K\"othe sequence spaces, disjoint singularity is the core notion in K\"othe function spaces. The purpose of this paper is then to study the \emph{disjointly supported} versions of the basic (trivial, locally trivial, singular and supersingular) notions in the theory of centralizers and present several crucial examples.

\section{Background}
 Most of the action in this paper will take place in the ambient of K\"othe  functions spaces over a $\sigma$-finite measure space $(\Sigma, \mu)$ endowed with their $L_\infty$-module structure. A K\"othe function space $K$ is a linear subspace of $L_0(\Sigma, \mu)$, the vector space of all measurable functions, endowed with a quasi-norm such that whenever $|f|\leq g $ and $g\in K$ then $f\in K$ and $\|f\|\leq \|g\|$ and so that for every finite measure subset $A\subset \Sigma$ the characteristic function $1_A$ belongs to $X$. A particular case of which is that of Banach spaces with a 1-unconditional basis (called K\"othe sequence spaces in what follows) with their associated $\ell_\infty$-module structure.

\subsection{Exact sequences, quasi-linear maps and centralizers}

For a rather complete background on the theory of twisted sums see \cite{castgonz}. We recall that  a twisted sum of two Banach spaces $Y$,  $Z$ is  a quasi-Banach space $X$ which has a closed subspace isomorphic to $Y$ such that the quotient $X/Y$ is isomorphic to $Z$. Equivalently, $X$ is a twisted sum of $Y$, $Z$ if there exists a short exact sequence
$$\begin{CD}
0@>>>  Y@>>> Z @>>> X@>>> 0.\end{CD}$$

According to Kalton  and Peck \cite{kaltpeck}, twisted sums  can be identified with homogeneous maps $\Omega: X \to Y$ satisfying
\[ \| \Omega (x_1+x_2) - \Omega x_1- \Omega x_2\| \leq C(\|x_1\|+ \|x_2\|),\]
which are called  quasi-linear maps, and induce an equivalent quasi-norm on $X$ (seen
algebraically as $Y \times X$) by $$\|(y,x)\|_\Omega=\|y-\Omega z\| +\|x\|.$$
This space is usually denoted $Y\oplus_\Omega X$. When $Y$ and $X$ are, for example,  Banach spaces  of non-trivial type, the quasi-norm above is equivalent to a norm;  therefore, the twisted sum obtained is a Banach space. Two exact sequences $0 \to Y \to Z_1 \to X \to 0$ and $0 \to Y \to Z_2 \to X \to 0$
are said to be {\it equivalent} if there exists an operator $T:Z_1\to Z_2$ such that the following
diagram commutes:
$$
\begin{CD}
0 @>>>Y@>>>Z_1@>>>X@>>>0\\
&&@| @VVTV @|\\
0 @>>>Y@>>>Z_2@>>>Z@>>>0.
\end{CD}$$
The classical 3-lemma (see \cite[p. 3]{castgonz}) shows that $T$ must be an isomorphism.

\adef An $L_\infty$-centralizer (resp. an $\ell_\infty$-centralizer) on a K\"othe function (resp. sequence) space $\mathcal K$  is a homogeneous map $\Omega: \mathcal K \to L_0$  such that  there is a constant $C$  satisfying that,  for every $f\in L_\infty$ (resp. $\ell_\infty$) and for every $x\in \mathcal K$, the difference $\Omega(fx)- f\Omega(x)$ belongs to $\mathcal K$ and
$$ \| \Omega(fx)- f\Omega(x)\|_{\mathcal K}\leq C\|f\|_\infty  \|x\|_{\mathcal K}. $$
The centralizer is called real when it sends real functions (sequences) to real functions (sequences).
\zdef
When no confusion arises we will simply say: a centralizer. Observe that a centralizer $\Omega$ on $\mathcal K$ does not take values in $\mathcal K$, but in $L_0$, and still it induces an exact sequence
$$\begin{CD}
0@>>>  \mathcal K @>\jmath >> d_\Omega \mathcal K @>Q>> \mathcal K@>>> 0\end{CD}$$
as follows: $d_\Omega \mathcal K= \{ (w, x) : w\in L_0, x\in \mathcal K: w - \Omega x\in \mathcal K\}$ endowed with the
quasi-norm $$\|(w,x)\|_{d_\Omega \mathcal K}=\|x\|_{\mathcal K} +\|w- \Omega x\|_{\mathcal K}$$ and with obvious inclusion $\jmath(x) = (x, 0)$ and quotient map $Q(w,x)=x$.
The reason is that a centralizer ``is" quasi-linear, in the sense that for all $x,y\in \mathcal K$ one has $\Omega(x+y) - \Omega(x) - \Omega(y) \in \mathcal K$ and $\|\Omega(x+y) - \Omega(x) - \Omega(y)\|\leq C(\|x\|+\|y\|)$ for some $C>0$ and all $x,y\in \mathcal K$.
To describe the fact that the centralizer acts $\Omega: \mathcal K\to L_0$ but defines a twisted sum of $\mathcal K$ with itself we will use sometimes the notation $\Omega: \mathcal K \lop \mathcal K$. Centralizers arise naturally by complex interpolation \cite{Bergh-Lofstrom} as can be seen in \cite{kaltdiff}.

 \subsection{Trivial maps}
An exact sequence $0 \to Y \to Z \to X \to 0$ is trivial if and only if it is equivalent to
$0 \to Y \to Y \oplus X \to X \to 0$, where $Y\oplus X$ is endowed with the product norm.
In this case we say that the exact sequence \emph{splits.} Two quasi-linear maps $\Omega, \Omega': X \to Y$ are said to be equivalent, denoted $\Omega\equiv \Omega'$,
if the difference $\Omega-\Omega'$ can be written as $B +L$, where $B: X \to Y$ is a homogeneous bounded
map (not necessarily linear) and $L: X \to Y$ is a linear map (not necessarily bounded).
Two quasi-linear maps are equivalent if and only if the associated exact sequences
are equivalent. A quasi-linear map is trivial if it is equivalent to the $0$ map,  which also means that  the associated exact sequence is trivial. Given two Banach spaces $X,Y$ we will denote by $\ell(X,Y)$ the vector space of linear (not necessarily continuous) maps $X\to Y$. The distance between two homogeneous maps $T,S$ will be the usual operator norm (the supremum on the unit ball) of the difference; i.e., $\|T-S\|$, which can make sense even when $S$ and $T$ are unbounded. So a quasi-linear map $\Omega: X \To Y$ is trivial if and only if $d(\Omega,\ell(X,Y))\leq C <+\infty$, in which case we will say that $\Omega$ is $C$-trivial. A centralizer $\mathcal K\lop \mathcal K$ is trivial if and only if there is a linear map $L: \mathcal K\to L_0$ so that $\Omega - L: \mathcal K\to \mathcal K$ is bounded.

 \subsection{Locally trivial maps}

 A quasi-linear map $\Omega: X \to  Y$ is said to be {\em locally trivial} \cite{kaltloc} if there exists $C>0$ such that or any finite dimensional subspace $F$ of $X$, there exists a linear map $L_F$ such that $\|\Omega_{|F}-L_F\| \leq C$. It is clear that a trivial map is locally trivial. The converse is not true, although locally trivial quasi-linear maps $F: X\to Y$ in which $Y$ is reflexive are trivial, by \cite{cabecastuni}.

 \subsection{Singular maps}

An operator between Banach spaces is said to be \emph{strictly singular}
if no restriction to an infinite dimensional closed subspace is an isomorphism. Analogously,
a quasi-linear map (in particular, a centralizer) is said to be \emph{singular} if its
restriction to every infinite dimensional closed subspace is never trivial.
An exact sequence induced by a singular quasi-linear map is called a \emph{singular sequence.} A  quasi-linear map is singular if and only
if the associated exact sequence has strictly singular quotient map  \cite[Lemma 1]{castmorestrict}. Singular $\ell_\infty$-centralizers exist and the most natural example is the Kalton-Peck map $\mathscr K_p: \ell_p\lop \ell_p$, $0<p<+\infty$, defined by $\mathscr K_p(x)  = x\log \frac{|x|}{\|x\|_p}.$ The proof that $\mathscr K_p$ is singular can be found in \cite{kaltpeck} for $1<p<+\infty$, \cite{castmorestrict} for $p=1$, and \cite{ccs} for all $0<p<+\infty$. A simple characterization of singular $\ell_\infty$-centralizers on Banach sequence spaces can be presented

\begin{proposition}\label{sin} Let $X$ be a Banach space with an unconditional basis not containing $c_0$. Let $\Omega:X \lop X$ be an $\ell_\infty$-centralizer such that for every sequence $(A_k)$ of
consecutive subsets of $\N$ and every sequence $(u_n)$ of consecutive normalized blocks of
the basis, for which $\sup_k \|\sum_{n\in A_k} u_n\| \to +\infty$ one has
\begin{equation*}\lim \sup_k \frac{\|\Omega(\sum_{n\in A_k} u_n) -
\sum_{n\in A_k} \Omega( u_n) \|}{\|\sum_{n\in A_k} u_n\|}
=+\infty.\end{equation*}
Then $\Omega$ is  singular.
\end{proposition}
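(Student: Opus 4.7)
I would argue by contrapositive. Suppose $\Omega$ is not singular; then there exists an infinite-dimensional closed subspace $Y\subset X$ on which $\Omega$ is trivial, i.e., there is a linear map $L\colon Y\to L_0$ such that $B:=\Omega|_Y-L\colon Y\to X$ is homogeneous and bounded with some constant $M$. By the Bessaga--Pe\l{}czy\'nski selection principle, after a small perturbation absorbed into $B$, I may replace $Y$ by the closed linear span $[u_n]$ of a normalized block sequence $(u_n)$ of the unconditional basis of $X$, with $\Omega|_{[u_n]}=L+B$ in the same sense.

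Using the hypothesis $c_0\not\subset X$, the series $\sum u_n$ is not weakly unconditionally Cauchy, so there exist consecutive finite subsets $A_k\subset\N$ with $v_k:=\sum_{n\in A_k}u_n$ satisfying $\|v_k\|\to+\infty$. Because $L$ is linear, $L(v_k)=\sum_{n\in A_k}L(u_n)$, and hence
$$\Omega(v_k)-\sum_{n\in A_k}\Omega(u_n)\ =\ B(v_k)-\sum_{n\in A_k}B(u_n).$$
Boundedness of $B$ gives $\|B(v_k)\|\leq M\|v_k\|$, so
$$\frac{\bigl\|\Omega(v_k)-\sum_{n\in A_k}\Omega(u_n)\bigr\|}{\|v_k\|}\ \leq\ M\ +\ \frac{\bigl\|\sum_{n\in A_k}B(u_n)\bigr\|}{\|v_k\|}.$$
To contradict the hypothesis it therefore suffices to show that, for a well-chosen block subsequence of $(u_n)$ and suitable $A_k$ with $\|v_k\|\to+\infty$, the right-hand fraction stays bounded.

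The main obstacle is precisely this last bound, since the vectors $B(u_n)$ are a priori only known to have uniformly bounded $X$-norm. The strategy is to exploit the centralizer property together with the disjointness of the supports of the $u_n$: applying the centralizer inequality to the multiplier $1_{\supp u_n^c}$ yields $\Omega(u_n)-1_{\supp u_n}\Omega(u_n)\in X$ with norm at most $C_\Omega$, so each $B(u_n)$ decomposes as the sum of a piece essentially supported on $\supp u_n$ and a uniformly bounded remainder in $X$. I would then run a gliding-hump / Bessaga--Pe\l{}czy\'nski argument on $(B(u_n))_n$ in $X$, extracting a further block subsequence on which the $B(u_n)$ are nearly disjointly supported with respect to the unconditional basis of $X$; unconditionality together with $c_0\not\subset X$ should then control $\bigl\|\sum_{n\in A_k}B(u_n)\bigr\|$ by $O(\|v_k\|)$, giving the required contradiction. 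The delicate step is precisely this last support/normalization estimate, which genuinely uses both hypotheses on $X$.
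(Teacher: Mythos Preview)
Your overall architecture matches the paper's: assume $\Omega$ is trivial on some infinite-dimensional subspace, pass to a block subspace $[u_n]$ on which $\Omega$ remains trivial, and contradict the hypothesis. Two points deserve comment.

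First, the reduction to a block subspace is not plain Bessaga--Pe\l czy\'nski: you need the \emph{triviality of $\Omega$} to survive the perturbation, not just the subspace. The paper invokes the ``blocking principle'' for centralizers from \cite{ccs} for exactly this purpose; your ``small perturbation absorbed into $B$'' hides a genuine lemma here.

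Second, and more importantly, the step you flag as ``delicate'' is indeed the crux, and your proposed resolution is insufficient as stated. The paper handles it by quoting from \cite{ccs,cfg} the fact that, once $\Omega$ is known to be trivial on $[u_n]$, the \emph{specific} linear map $\ell(u_n)=\Omega(u_n)$ already witnesses the triviality; with this choice $B(u_n)=0$ and the problematic sum vanishes identically. Your route---make the $B(u_n)$ disjointly supported via gliding hump, then hope unconditionality plus $c_0\not\subset X$ bounds $\|\sum_{n\in A_k} B(u_n)\|$ by $C\|v_k\|$---does not close. Even granting $\supp B(u_n)\subset\supp u_n$ (which your centralizer observation, together with choosing $L$ support-preserving as in \cite[Lemma~3.17]{cfg}, does give), there is no general lattice inequality of the form $\|\sum w_n\|\le C\|\sum u_n\|$ for disjoint $w_n$ with $\supp w_n\subset\supp u_n$ and $\|w_n\|\le M$; and $c_0\not\subset X$ only yields a \emph{lower} estimate on $\|v_k\|$, not an \emph{upper} estimate on $\|\sum B(u_n)\|$. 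The correct manoeuvre is an averaging over the group of sign-multipliers $\{\sum_n\pm 1_{\supp u_n}\}$, exploiting the centralizer identity $\|\Omega(\epsilon x)-\epsilon\Omega(x)\|\le C_\Omega\|x\|$, to replace $L$ by $\ell$; this is precisely the ``standard manipulation'' the paper cites, and a version of it reappears later in the paper (see the proof of Lemma~5.1 and Lemmas~4.3--4.5). Once that replacement is made, the ratio in the hypothesis is bounded for \emph{every} choice of $(A_k)$, forcing $(u_n)$ to be equivalent to the $c_0$-basis and giving the contradiction.
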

\begin{proof} If $\Omega: X\lop X$ is an $\ell_\infty$-centralizer verifying the condition above and, at the same time, trivial on some subspace $H$, by the blocking principle
(see \cite{ccs}), it must be trivial on the subspace $[u_n]$ spanned by some consecutive blocks
of the basis. Standard manipulations (see \cite{ccs,cfg}) show that the linear map $\ell(u_n)=\Omega(u_n)$ is at finite distance from $\Omega$, which implies that $\lim \sup_k \|\sum_{n\in A_k} u_n\| <+\infty$ for all choices of $(A_k)$, thus $(u_n)$ is equivalent to the canonical basis of $c_0$ and consequently $H$ contains $c_0$. \end{proof}

In sharp contrast, Cabello \cite{cabekp} proved that no $L_\infty$-centralizer is singular on $L_p[0,1]$. Let us observe that quite the same proof of Cabello provides:

\begin{proposition}\label{superfelix} No singular $L_\infty$-centralizers exist on (admissible) superreflexive K\"othe funcion spaces. More precisely, every $L_\infty$-centralizer on an admissible superreflexive K\"othe function space is bounded on some copy of $\ell_2$.\end{proposition}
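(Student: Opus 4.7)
The plan is to follow Cabello's strategy from \cite{cabekp}, in three stages: first, to locate a copy of $\ell_2$ inside $\mathcal K$ on which the group $\{-1,+1\}^{\N}$ acts by $L_\infty$-multipliers of norm one; second, to symmetrize $\Omega$ by averaging it over this action; and third, to extract boundedness of the resulting symmetric map via a Khintchine-type estimate in $\mathcal K$.

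In the first stage, superreflexivity combined with the admissibility hypothesis supplies a sequence $(f_n)\subset\mathcal K$ whose closed span $H$ is isomorphic to $\ell_2$, together with a family of $\{-1,+1\}$-valued $L_\infty$-multipliers $(g_\epsilon)_{\epsilon\in\{-1,+1\}^{\N}}$ of norm one satisfying $g_\epsilon f_n=\epsilon_n f_n$ for every $n$. The simplest instance is the disjoint model $g_\epsilon=\sum_n\epsilon_n 1_{\supp f_n}$ when $(f_n)$ is a disjoint $\ell_2$-sequence, but in the original $L_p[0,1]$ setting Cabello uses Rademacher-type products on a dyadic decomposition; admissibility is precisely what is needed to ensure such a structure exists. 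Setting $A=\bigcup_n\supp f_n$, the centralizer inequality applied to the multiplier $1_A$ and to $x\in H$ (for which $1_Ax=x$) gives $\|(1-1_A)\Omega(x)\|_{\mathcal K}\le C\|x\|_{\mathcal K}$, so $\Omega(x)$ is concentrated on $A$ up to a bounded error.

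In the second stage, the centralizer inequality applied to $g_\epsilon$ yields $\|\Omega(g_\epsilon x)-g_\epsilon\Omega(x)\|_{\mathcal K}\le C\|x\|_{\mathcal K}$ uniformly in $\epsilon$ and in $x\in H$. Because $g_\epsilon^2=1_A$, multiplying by $g_\epsilon$ gives $g_\epsilon\Omega(g_\epsilon x)=1_A\Omega(x)+r_\epsilon(x)$ with $\|r_\epsilon(x)\|_{\mathcal K}\le C\|x\|_{\mathcal K}$, and averaging over the uniform measure on $\{-1,+1\}^{\N}$ as a Bochner integral in the Banach space $\mathcal K$ defines $T(x):=\mathbb{E}_\epsilon[g_\epsilon\Omega(g_\epsilon x)]$ at uniformly bounded distance from $\Omega_{|H}$. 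The change of variable $\epsilon\mapsto\epsilon\delta$ in the defining expectation yields the key symmetry $T(g_\delta x)=g_\delta T(x)$ for every $\delta$; testing with $\delta$'s that flip a single coordinate and invoking homogeneity of $T$, one deduces that $T(f_n)$ is supported exactly on $\supp f_n$ and, more generally, that $T(x)$ is supported on the union of the supports of the non-zero coordinates of $x$ in $(f_n)$.

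The main obstacle is the final step: deducing boundedness of $T$ on $H$ from its symmetry and quasi-linearity alone. In the $L_p[0,1]$ setting Cabello achieves this by combining quasi-linearity with Khintchine's inequality applied to the Rademacher averages; in the general admissible superreflexive setting, the admissibility hypothesis is meant to provide the analogous Khintchine-type control in $\mathcal K$ for the averages $\mathbb{E}_\epsilon[g_\epsilon\Omega(g_\epsilon x)]$. Once this estimate is secured, $\Omega$ lies at bounded distance from the bounded map $T$ on $H$, so $\Omega$ is bounded on the copy $H\cong\ell_2$; in particular $\Omega$ cannot be singular, and both conclusions of the proposition follow.
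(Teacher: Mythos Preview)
Your proposal takes a genuinely different route from the paper's, and the route you sketch is left incomplete at precisely the two points where the actual work lies.

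The paper does not symmetrize $\Omega$ at all. Its argument is an interpolation argument: by Kalton's representation theorem \cite[Thm.~7.6]{kaltdiff}, every centralizer on an admissible K\"othe space is boundedly equivalent to the derivation $\Omega_{1/2}$ of a scale $(A,B)_{1/2}=\mathcal K$ with $A,B$ admissible and (after reiteration) superreflexive. Admissibility localizes to a set $S$ on which $L_\infty(S)\subset A(S),B(S)\subset L_1(S)$; superreflexivity upgrades this to $L_q(S)\subset A(S),B(S)\subset L_p(S)$ for some $1<p,q<\infty$. Khintchine's inequality then makes the $A$-norm and the $B$-norm equivalent on the Rademacher subspace $R(S)\simeq\ell_2$, and the equivalence of the two endpoint norms forces $\Omega_{1/2}$ to be \emph{bounded} on $R(S)$ directly from the definition of the derivation---no averaging is needed. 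This is, incidentally, exactly Cabello's argument in \cite{cabekp}; it is not a symmetrization proof.

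In your version, Stage~1 asserts that ``admissibility is precisely what is needed'' to locate the $\ell_2$ copy, but you do not say how; the mechanism is precisely the chain admissibility $\Rightarrow$ $L_\infty/L_1$ sandwich $\Rightarrow$ $L_q/L_p$ sandwich via nontrivial convexity/concavity $\Rightarrow$ Khintchine on the Rademachers, i.e.\ the paper's argument. Stage~3 you yourself flag as ``the main obstacle'' and do not resolve: symmetry $T(g_\delta x)=g_\delta T(x)$ plus quasi-linearity does not by itself give boundedness of $T$ on $H$, and invoking an unspecified ``Khintchine-type control in $\mathcal K$'' is not a proof. There is also a technical snag in Stage~2: $\Omega$ takes values in $L_0$, not in $\mathcal K$, so the Bochner expectation $\mathbb E_\epsilon[g_\epsilon\Omega(g_\epsilon x)]$ is not a priori defined in $\mathcal K$; one would first need to subtract a fixed representative (say $1_A\Omega(x)$) and average the $\mathcal K$-valued remainder. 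None of these issues is insurmountable, but as written the proposal is a programme rather than a proof, and the missing pieces are exactly what the interpolation route supplies for free.
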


\emph{Sketch of proof}: Recall that according to Kalton \cite[p.482]{kaltdiff} a K\"othe space is termed admissible when for some strictly positive functions $h, k\in L_0$ one has $\|hk\|_1\leq \|x\|_{\mathcal K}\leq \|kx\|_\infty$ for every $x\in \mathcal K$. By Kalton's theorem \cite[Thm. 7.6]{kaltdiff} plus the comments in \cite[Section 1.3]{cabekp} there are two admissible K\"othe spaces $A,B$ so that $\mathcal K=(A,B)_{1/2}$; these spaces can be assumed to be superreflexive by reiteration and \cite[Thm. 7.8]{kalt-mon}. The admissibility hypothesis yields functions $h_a, k_a$ such that $\|h_a f\| \leq \|f\|_A\leq \|k_a f\|_\infty$ for every $f \in A$; and functions $h_b, k_b$ such that
$\|h_b f\| \leq \|f\|_B\leq \|k_b f\|_\infty$ for every $f \in B$. Thus, one can find a positive measure set $S\subset \Sigma$ and a constant $M>0$ such that $k_a, k_b \leq M$ and $h_a, h_b \geq M^{-1}$. This provides continuous inclusions $L_\infty(S)\subset A(S)\subset L_1(S)$ and $L_\infty(S)\subset B(S)\subset L_1(S)$ .

By super-reflexivity, both spaces $A,B$ are $p$-convex and $q$-concave for some $1<p,q<+\infty$ (\cite[Thm 1.f.12 and  Thm 1.f.7.]{lindtzaf2})
So, using the Johnson-Maurey-Schechtman-Tzafriri remark \cite[p.14]{jmst} then also $L_q(S)\subset A(S)\subset L_p(S)$ and $L_q(S)\subset B(S)\subset L_p(S)$ . Let $R(S)$ be the subspace generated by Rademacher functions supported in $S$. The $L_p$ and $L_q$-norms are equivalent on $R(S)$ by Khintchine's inequality, and are also equivalent to $\|\cdot\|_A$ and to $\|\cdot\|_B$, and thus $R(S)\sim \ell_2$. The equivalence of norms $A$ and $B$ on $R(S)$ makes the differential $\Omega_{1/2}$ bounded on $R(S)$, and since $\Omega_K$ is boundedly equivalent to $\Omega_{1/2}$, it must be bounded too.

\subsection{Super-singular maps}

An operator $T: X\to Y$ between two Banach spaces is said to be {\em super strictly singular} (in short, super-SS) if  there does not exist a number $c>0$ and a sequence of subspaces $E_n$ of $X$, with $\dim \, E_n=n$, such that $\|Tx\|\geq c\|x\|$ for every $x\in \bigcup_n E_n$.
Equivalently \cite[Lemma 1.1.]{css}, if every ultrapower of $T$ is strictly singular. Super strictly singular operators have also been called finitely strictly singular; they were first introduced in \cite{M, M2}, and form a closed ideal containing the ideal of compact and contained in the ideal of strictly singular operators. See also \cite{css} for the study of such a notion in the context of twisted sums, as well as \cite{ccfm} where a few results are also mentioned in relation to complex structures on twisted sums.

It is a standard fact (see \cite{castgonz}) that given an exact sequence $0\to Y\to Z \to Z \to 0$ and an ultrafilter $\U$ the ultrapowers form an exact sequence $0\to Y_\U\to Z_\U \to X_\U \to 0$. If $\Omega$ is a quasi-linear map associated to the former sequence we will call $\Omega_\U$ any quasi-linear map associated to the later. We do not need for the moment to specify the construction of $\Omega_\U$.
We will say, following \cite{ccfm} that a quasi-linear map $\Omega$ is super-singular if every ultrapower $\Omega_\U$ is singular. We need to state here two facts proved in \cite{ccfm}:
\begin{itemize}
\item  $\Omega$ is super-singular if and only the  quotient map  $q_\Omega$ of the exact sequence it defines is super strictly singular.
\item No super singular quasi-linear maps between $B$-convex Banach spaces exist. This follows from \cite[Thm. 3]{pliv}, where it is proved that a super strictly singular operator on a $B$-convex space has super strictly singular adjoint. Since superreflexive spaces are $B$-convex, $B$-convexity is a $3$-space property (see \cite{castgonz}) and  the adjoint of a quotient map is an into isomorphism, the result follows.\end{itemize}

After these prolegomena, we tackle the study of the ``disjoint" versions of the preceding properties. It is worth to observe that all our forthcoming ``disjoint" notions admits an immediate translation to general quasi-linear maps on Banach lattices.

\section{Disjoint local triviality} \adef A quasi-linear map $\Omega: \mathcal K \to  Y$ defined on a Banach lattice is said to be {\em disjointly trivial} if it is trivial on any subspace generated by a sequence of disjointly supported elements. It is said to be {\em locally disjointly trivial} if there exists $C>0$ such that for any finite dimensional subspace $F$ of $\mathcal K$ generated by disjointly supported vectors, there exists a linear map $L_F$ such that $\|\Omega_{|F}-L_F\| \leq C$.\zdef

We can show:

\begin{proposition}\label{bounded}
Let $\Omega: \mathcal K \to Y$ be a quasi-linear map on a K\"othe space $\mathcal K$. Consider the following assertions:
\begin{itemize}
\item[(i)] $\Omega$ is trivial.
\item[(ii)] $\Omega$ is disjointly trivial.
\item[(iii)] $\Omega$ is locally disjointly trivial.
\item[(iv)] $\Omega$ is locally trivial.
\end{itemize}
Then $(i) \Rightarrow (ii) \Rightarrow (iii) \Leftrightarrow (iv)$. Moreover, if $Y$ is complemented in its bidual, then all assertions are equivalent.
\end{proposition}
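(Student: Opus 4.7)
The plan is to handle the implications $(i) \Rightarrow (ii)$, $(ii) \Rightarrow (iii)$, $(iii) \Leftrightarrow (iv)$, and the ``moreover'' clause in turn. The implication $(i) \Rightarrow (ii)$ is immediate: a decomposition $\Omega = B + L$ with $B$ homogeneous bounded and $L$ linear restricts to any subspace $H$, in particular one generated by disjointly supported elements, so $\Omega|_H$ is trivial. Likewise $(iv) \Rightarrow (iii)$ is immediate, since the class of disjointly-generated finite-dimensional subspaces is contained in the class of all finite-dimensional ones, with the same constant.

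For $(ii) \Rightarrow (iii)$, I would argue by contradiction. Assume $\Omega$ is disjointly trivial but not locally disjointly trivial, so that for each $n$ there is a finite-dimensional disjointly-generated $F_n \subset \mathcal K$ with $d(\Omega|_{F_n}, \ell(F_n, Y)) > n$. Using the lattice or measure structure of $\mathcal K$, I relocate the generators of each $F_n$ to make the different blocks live in pairwise disjoint support regions across $n$ (right-shifting blocks in a K\"othe sequence space, or applying measure-preserving translations of support in an atomless $\sigma$-finite function space). The concatenation is then a single sequence of pairwise disjoint vectors, and on its closed linear span $H$ the disjoint-triviality hypothesis yields a decomposition $\Omega|_H = B + L$ with $\|B\| \leq C$, whence $d(\Omega|_{F_n}, \ell(F_n, Y)) \leq C$ for every $n$, contradicting $d(\Omega|_{F_n}, \ell(F_n, Y)) > n$.

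For $(iii) \Rightarrow (iv)$, the plan is an approximation-by-simple-functions argument. Given a general finite-dimensional $F = \mathrm{span}(f_1, \dots, f_n)$ and $\varepsilon > 0$, pick simultaneous simple approximations $\tilde f_i$ sharing a common finite partition $\{A_j\}$, so that $\tilde F := \mathrm{span}(\tilde f_i)$ lies in the disjointly-generated $G := \mathrm{span}(1_{A_j})_j$. By $(iii)$ fix a linear $L_G$ with $\|\Omega|_G - L_G\| \leq C$, and transfer via the linear isomorphism $T_\varepsilon : F \to \tilde F$, $f_i \mapsto \tilde f_i$, to a candidate linear $\ell_\varepsilon(x) := L_G(T_\varepsilon x)$. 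Quasi-linearity of $\Omega$ reduces the bound $\|\Omega|_F - \ell_\varepsilon\|$ to control of the error term $\Omega(x - T_\varepsilon x)$, which is the main obstacle, because $\Omega$ is not continuous and so $\|\Omega(x - T_\varepsilon x)\|$ need not be small even when $\|x - T_\varepsilon x\|$ is. I expect to address this by arranging for $x - T_\varepsilon x$ itself to sit in a further disjointly-generated finite-dimensional subspace via a refined partition and applying $(iii)$ there, then diagonalizing as $\varepsilon \to 0$.

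For the ``moreover'' clause, suppose $Y$ is complemented in $Y^{**}$ by a projection $P$ and assume $(iv)$ with constant $C$. For each finite-dimensional $F \subset \mathcal K$ fix $L_F : F \to Y$ with $\|\Omega|_F - L_F\| \leq C$. For each $x \in \mathcal K$ the net $(L_F(x))_{F \ni x}$ is norm-bounded in $Y$ by $\|\Omega(x)\| + C\|x\|$, hence relatively weak-$\ast$ compact in $Y^{**}$. Pick a non-principal ultrafilter $\mathcal U$ on the inclusion-directed collection of finite-dimensional subspaces of $\mathcal K$ and set $\widetilde L(x) := \mathrm{w}^{\ast}\text{-}\lim_{\mathcal U} L_F(x)$; directedness of the index set transmits linearity from each $L_F$ to $\widetilde L : \mathcal K \to Y^{**}$, and weak-$\ast$ closedness of balls in $Y^{**}$ preserves the bound $\|\Omega(x) - \widetilde L(x)\|_{Y^{**}} \leq C\|x\|$. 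Setting $L := P \circ \widetilde L : \mathcal K \to Y$, linear, and using $P|_Y = \mathrm{id}_Y$ so that $P\Omega = \Omega$, we obtain $\|\Omega(x) - L(x)\|_Y = \|P(\Omega(x) - \widetilde L(x))\|_Y \leq \|P\|\cdot C \|x\|$. Hence $\Omega - L$ is bounded, $\Omega$ is trivial, giving $(iv) \Rightarrow (i)$ and closing the chain of equivalences.
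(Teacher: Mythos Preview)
Your treatment of $(i)\Rightarrow(ii)$, $(iv)\Rightarrow(iii)$, and the ``moreover'' clause is fine; the ultrafilter argument you give for $(iv)\Rightarrow(i)$ is essentially the standard one the paper cites. The two substantive implications, however, both have genuine gaps.

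\textbf{On $(ii)\Rightarrow(iii)$.} Your relocation idea does not work for a general quasi-linear map. The map $\Omega$ has no reason to commute with shifts in a sequence space or with measure-preserving transformations in a function space: if $S$ is such an isometry of $\mathcal K$, there is no relation whatsoever between $\Omega|_{F_n}$ and $\Omega|_{S(F_n)}$, so after moving the blocks you lose the information $d(\Omega|_{F_n},\ell(F_n,Y))>n$. (For a concrete obstruction, think of $\Omega=\mathscr K\circ P$ on $\ell_2$ with $P$ the projection onto the odd coordinates: $\Omega$ is highly nontrivial on odd-supported blocks and identically zero on even-supported ones.) The paper avoids any relocation by a recursive bisection of the base measure space: it shows that if $\Omega$ is not (locally) trivial on $\mathcal K$, then $\Sigma$ splits as $A\cup B$ with $\Omega$ not trivial on either $\mathcal K(A)$ or $\mathcal K(B)$; iterating produces disjoint sets $A_1,A_2,\dots$ with $\Omega$ not locally trivial on each $\mathcal K(A_k)$, and one then picks finite disjoint witnesses inside each $A_k$ with growing triviality constant. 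The resulting infinite disjoint sequence already lives where it must, with no translation needed.

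\textbf{On $(iii)\Rightarrow(iv)$.} You correctly identify the obstacle: $\Omega(x-T_\varepsilon x)$ need not be small just because $\|x-T_\varepsilon x\|$ is. But your proposed fix---placing $x-T_\varepsilon x$ in a further disjointly-generated subspace via a refined partition---cannot succeed: $f_i-\tilde f_i$ is a general measurable function minus a simple one and will not be simple no matter how you refine the partition, so the differences do not sit inside any disjointly-generated finite-dimensional subspace. The paper's argument is different and uses an extra ingredient: one chooses a \emph{nuclear} perturbation $N$ of arbitrarily small nuclear norm with $(I+N)(F)\subset[u_j]$, and then invokes a lemma (from \cite{ccfm}) to the effect that $\Omega\circ N$ is close to a linear map with error controlled by the nuclear norm of $N$. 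Quasi-linearity then gives $\Omega|_F \approx \Omega(I+N)|_F - \Omega N|_F$, with the first piece handled by $(iii)$ and the second by the nuclear lemma. Your $T_\varepsilon-I$ is in fact such an $N$, but you are missing precisely this control of $\Omega N$, and the ``diagonalization'' you sketch does not supply it.
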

\begin{proof}
Assertions $(i)$ and $(iv)$ are well-known to be equivalent: trivial implies locally trivial while, see \cite{cabecastuni}, a locally trivial quasi-linear map taking values in a space complemented in its bidual is trivial. That $(i)$ implies $(ii)$ is obvious. Let us show that $(iii)$ implies $(iv)$: Let $\Omega$ be a quasi-linear map verifying $(iii)$ and let $F$ be a finite dimensional subspace of $\mathcal K$. Approximating functions by characteristic functions we may find a nuclear operator $N$ on $\mathcal K$ of arbitrary small norm so that $(Id+N)(F)$ is contained in the linear span $ [u_n]$ of a finite sequence of disjointly supported vectors. The restriction $\Omega_{|[u_n]}$ is trivial with constant $C$, thus using \cite[Lemma 5.6]{ccfm}, we get that $\Omega=\Omega(I+N)-\Omega N$ is trivial with constant $C+\epsilon$ on $F$.
Therefore $(iv)$ holds.\medskip

It remains to show that $(ii)$ implies $(iii)$. Let $\Sigma$ be the $\sigma$-finite base space on which the K\"othe function space $\mathcal K$ is defined.  For a subset $A\subset \Sigma$ we will denote $\mathcal K(A)$ the subspace of $\mathcal K$ formed by those functions with support contained in $A$.\medskip

\textbf{Claim 1.} \emph{If $A$ and $B$ are disjoint and $\Omega$ is trivial on both $\mathcal K(A)$ and $\mathcal K(B)$ then it is trivial on $\mathcal K(A \cup B)$.} Indeed, if $\|\Omega_{|\mathcal K(A)}-a\| \leq c$ and
$\|\Omega_{|\mathcal K(B)}-b\| \leq d$, where $a$ and $b$ are linear, then $\|\Omega_{|\mathcal K(A \cup B)}-(a \oplus b)\| \leq 2(Z(\Omega)+c+d)$, where $a \oplus b$ is the obvious linear map on $\mathcal K(A \cup B)$.\medskip

\textbf{Claim 2.} \emph{If $\Omega$ is nontrivial on $X$ then $\Sigma$ can be split in two sets $\Sigma = A \cup B$ so that $\Omega_{|\mathcal K(A)}$ and $\Omega_{|\mathcal K(B)}$ are both nontrivial}.  We first assume that $\Sigma$ is a finite measure space. Assume the claim does not hold. Split $\Sigma = R_1 \cup I_1$ in two sets  of the same measure and assume $\Omega_{|\mathcal K(I_1)}$ is trivial. Note that since the claim does not hold, given any  $C \subset \Sigma$ and any splitting $C=A \cup B$ the map $\Omega$ is trivial on $\mathcal K(A)$ or $\mathcal K(B)$. So, split $R_1  = R_2 \cup I_2$ in two sets of equal measure and assume that  $\Omega_{|\mathcal K(I_2)}$ is trivial, and so on. If $\Omega$ is $\lambda$-trivial on $\mathcal K(\cup_{j \leq n} I_j)$ for $\lambda<+\infty$ and for all $n$ then $\Omega$ is locally trivial on $\mathcal K$ and therefore is trivial, a contradiction. If $\lambda_n\to \infty$ is such that $\Omega_{\mathcal K(\cup_{j \leq n}I_j)}$ is $\lambda_n+1$-trivial but not $\lambda_n$-trivial for all $n$, then by the Fact we note that
for $m<n$, $\Omega$ cannot be trivial with constant less
 than $\lambda_n/2-Z(\Omega)-\lambda_m-1$ on $\mathcal K(\cup_{m<j \leq n} I_j)$. From this we find a partition of $\N$ as $N_1 \cup N_2$ so that if $A=\cup_{n \in N_1} I_n$ and
$B=\cup_{n \in N_2} I_n$, then $\Omega$ is non trivial on $\mathcal K(A)$ and $\mathcal K(B)$, another contradiction.\medskip

If $\Sigma$ is $\sigma$-finite then the proof is essentially the same: either one can choose the sets $I_n$ all having measure, say, $1$  or at some step $R_m$ is of finite measure, and we are in the previous case. This concludes the proof of the claim.\medskip

We pass to complete the proof that $(ii)$ implies $(iii)$. Assume that $\Omega$ is not trivial on $\mathcal K$. By the claim, split $\Sigma = A_1 \cup B_1$ so that  $\Omega$ is trivial neither on $\mathcal K(A_1)$ nor on $\mathcal K(B_1)$. It cannot be locally trivial on them, so there is a finite number $\{u^1_n\}_{n\in F_1}$ of disjointly supported vectors on $\mathcal K(A_1)$ on which $\Omega$ is not $2$-trivial. By the claim applied to $\mathcal K(B_1)$ split $B_1= A_2 \cup B_2$ so that $\Omega$ is  trivial  neither on $\mathcal K(A_2)$ nor in $\mathcal K(B_2)$. It cannot be locally trivial on them, so there is a finite number of disjointly supported vectors $\{u^2_n\}_{n\in F_2}$ on $X(A_2)$ on which $\Omega$ is not $4$-trivial. Iterate the argument to produce a subspace $Y$ generated by an infinite sequence
$$ \{u^1_n\}_{n\in F_1} , \{u^2_n\}_{n\in F_2}, \dots, \{u^k_n\}_{n\in F_k}, \dots$$
of disjointly supported vectors, where $\Omega$ cannot be trivial.\end{proof}

An immediate corollary of (the proof of) Proposition \ref{bounded} is:

\begin{corollary} Given a K\"othe space $\mathcal K$ with base space $(\Sigma, \mu)$ and a non-locally trivial quasi-linear map $\Omega$ defined on $\mathcal K$ then there is a sequence $(A_n)$ of finite measure mutually disjoint subsets of $\Sigma$ so that the restriction $\Omega_{|[1_{A_n}]}$ is not locally trivial.
\end{corollary}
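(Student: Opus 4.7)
The plan is to mimic the iteration from the proof of $(ii) \Rightarrow (iii)$ in Proposition \ref{bounded}, replacing the abstract disjointly supported vectors $u_n^k$ it produces by characteristic functions of pairwise disjoint finite-measure subsets of $\Sigma$.

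First I would run the iterated bisection of Claim~2 to split $\Sigma = A_1 \cup B_1$, $B_1 = A_2 \cup B_2$, and so on, producing pairwise disjoint sets $A_k$, each of finite measure via the $\sigma$-finite reduction described in the proposition's proof, such that $\Omega_{|\mathcal K(A_k)}$ is not locally trivial. The upgrade from ``nontrivial on $A_k$'' (which Claim~2 literally delivers) to ``not locally trivial on $A_k$'' would be handled by the locally-trivial analogue of Claim~1: if $\Omega$ is $c$-locally trivial on $\mathcal K(A)$ and $d$-locally trivial on $\mathcal K(B)$ for disjoint $A,B$, then $\Omega$ is $(c+d+2Z(\Omega))$-locally trivial on $\mathcal K(A \cup B)$. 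This is established by writing any finite-dimensional $F \subset \mathcal K(A \cup B)$ inside $P_A(F) \oplus P_B(F)$ through the characteristic-function projections and gluing the approximating linear maps. Its contrapositive guarantees non-local-triviality on at least one of any two disjoint pieces whose union carries it, which keeps the iteration alive.

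Second, for each $k$, since $\Omega_{|\mathcal K(A_k)}$ is not locally trivial, Proposition \ref{bounded} supplies a finite disjointly supported family in $\mathcal K(A_k)$ on which $\Omega$ is not $(2^k+1)$-trivial. The nuclear perturbation trick from the proof of $(iii) \Rightarrow (iv)$ then allows me to replace each such vector by a simple function on $A_k$, and subsequently to pass to the common refinement of their supports into disjoint finite-measure sets $B_{k,1},\dots,B_{k,m_k} \subset A_k$, ending with $\Omega$ not $2^k$-trivial on the span $[1_{B_{k,i}}]_{i=1}^{m_k}$ (the perturbation loss being absorbed into the ``$+1$'').

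Since the $A_k$ are pairwise disjoint, so is the whole family $\{B_{k,i} : k \ge 1,\ 1 \le i \le m_k\}$, and each $B_{k,i}$ has finite measure. Enumerating the $B_{k,i}$ as $(A_n)_{n \ge 1}$, for any $C>0$ one picks $k$ with $2^k > C$: then $[1_{B_{k,i}}]_{i=1}^{m_k}$ is a finite-dimensional subspace of $[1_{A_n}]_{n \ge 1}$ on which $\Omega$ is not $C$-trivial, so $\Omega_{|[1_{A_n}]}$ is not locally trivial. The main obstacle in this plan is the first step: ensuring the bisection can be performed while preserving non-local-triviality---not merely non-triviality---on the active half $A_k$ at each stage, which is exactly what the locally-trivial analogue of Claim~1 is designed to deliver.
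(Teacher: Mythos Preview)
Your proposal is correct and follows essentially the same route the paper intends: run the iterated bisection from Claim~2 of Proposition~\ref{bounded} to produce disjoint pieces $A_k$ on which $\Omega$ fails local triviality, extract witnessing finite families inside each $\mathcal K(A_k)$, and use the nuclear--perturbation / simple--function approximation from the $(iii)\Rightarrow(iv)$ step to replace those witnesses by characteristic functions $1_{B_{k,i}}$. The paper states the corollary as ``immediate from the proof'' and offers no separate argument, so your write-up is in fact more explicit than the paper on two points: you isolate the locally-trivial analogue of Claim~1 (gluing local approximants through the band projections $P_A,P_B$), which is exactly what is needed to propagate \emph{non-local-triviality}, rather than mere non-triviality, to both halves at each bisection; and you spell out the passage from arbitrary disjointly supported witnesses to characteristic functions.

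One small inaccuracy that does not affect the argument: you assert that the intermediate sets $A_k$ themselves can be taken of finite measure ``via the $\sigma$-finite reduction''. Claim~2 does not quite deliver this (the half that carries non-local-triviality may well have infinite measure after the split), but you do not need it: the finite-measure requirement in the corollary concerns the final sets $B_{k,i}$, and those inherit finite measure automatically from the simple-function approximation.
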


In $L_p(\Sigma, \mu)$ a subspace $[1_{A_n}]$ is isomorphic to $\ell_p$ when $0<p<+\infty$ and to $c_0$ in $L_\infty(\Sigma, \mu)$.
It is not clear whether Proposition \ref{bounded} can be translated to the domain of Banach lattices. $C(K)$-spaces
are not, as a rule, K\"othe spaces; however, the following essential part of Proposition \ref{bounded} still survives \cite[Theorem 2.1]{ccalb} and since a disjointly supported sequence in a $C(K)$-space generates $c_0$, one has:

\begin{corollary}\label{este} Let $0<p <\infty$. Given a non-locally trivial map $\Omega$ defined on $L_p(\Sigma, \mu)$ (resp. $L_\infty(\Sigma, \mu)$, $C(K)$) there is a copy of $\ell_p$ (resp. $c_0$) spanned by disjointly supported vectors on which the restriction of $\Omega$ is not locally trivial.\end{corollary}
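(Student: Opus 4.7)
The plan is to deduce Corollary \ref{este} directly from the corollary immediately preceding it, together with the identification of spans of disjointly supported characteristic functions in $L_p$-spaces, and from \cite[Theorem 2.1]{ccalb} in the $C(K)$ case.

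For $0<p<\infty$, I would apply the preceding corollary to $\mathcal{K} = L_p(\Sigma,\mu)$: it produces a sequence $(A_n)$ of pairwise disjoint finite-measure subsets of $\Sigma$ such that $\Omega_{|[1_{A_n}]}$ is not locally trivial. The normalized characteristic functions $u_n := \mu(A_n)^{-1/p}\, 1_{A_n}$ are disjointly supported and span, isometrically, a copy of $\ell_p$, which settles the $L_p$ case.

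For $L_\infty(\Sigma,\mu)$ and for $C(K)$ the difficulty is that $C(K)$ is not in general a K\"othe function space in the sense of Section 2, so the preceding corollary does not apply verbatim. I would therefore invoke \cite[Theorem 2.1]{ccalb}, which serves as the analogue of the key equivalence $(iii)\Leftrightarrow(iv)$ from Proposition \ref{bounded} in this setting: a non-locally trivial quasi-linear map on a $C(K)$-space is not locally disjointly trivial either. Then the same Claims 1 and 2 used inside the proof of Proposition \ref{bounded} adapt, working with finite sequences of disjointly supported vectors in place of characteristic functions on sets of a partition, to build an infinite disjointly supported sequence on which $\Omega$ fails to be locally trivial. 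Since any normalized disjointly supported sequence in $C(K)$ (hence in $L_\infty$) is isometrically equivalent to the canonical basis of $c_0$, the proof would conclude.

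The main obstacle is the transition from the K\"othe framework to $C(K)$: once \cite[Theorem 2.1]{ccalb} is granted, the dichotomy of Claim 2 still runs, but one must partition clopen sets of $K$ (or the supports of the disjoint vectors produced along the way) rather than measurable subsets of $\Sigma$. Everything else is a direct transcription of the argument already appearing in Proposition \ref{bounded}.
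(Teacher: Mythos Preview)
Your proposal is correct and follows essentially the same route as the paper: for $L_p$ with $0<p<\infty$ you apply the preceding corollary and identify $[1_{A_n}]$ with $\ell_p$, and for $C(K)$ you invoke \cite[Theorem 2.1]{ccalb} together with the fact that disjointly supported sequences span $c_0$. The only minor deviation is that the paper treats $L_\infty(\Sigma,\mu)$ directly via the preceding corollary (since $L_\infty$ \emph{is} a K\"othe space, so $[1_{A_n}]\simeq c_0$ immediately), whereas you route $L_\infty$ through the $C(K)$ argument; both work, but the paper's way is shorter.
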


\section{Disjoint singularity}

Theorem \ref{bounded} shows that (local) triviality and disjoint (local) triviality are essentially equivalent. We shall now see that the situation is much more complex regarding {\em singularity} notions.

\adef A quasi-linear map on a Banach lattice is called \emph{disjointly singular} if
its restriction to every infinite dimensional subspace generated by a disjointly supported sequence is never trivial.
\zdef

Of course that a singular quasi-linear map is disjointly singular and a disjointly singular quasi-linear map on a K\"othe sequence space is singular. An open question, to the best of our knowledge due to F\'elix Cabello, is about the existence of singular quasi-linear maps on K\"othe function spaces; recall that no singular $L_\infty$-centralizers exist on any reasonable K\"othe space \cite{cabekp} (cf. Proposition \ref{superfelix}); see also \cite{suakp}).

\subsection{Examples}\label{examples}
\begin{enumerate}
\item As we mentioned at the introduction, the methods in \cite{cfg} actually produce disjointly singular centralizers. In particular, it is shown \cite[Proposition 5.4]{cfg} that the Kalton-Peck centralizer
$$\mathscr K(x)=x\log \frac{|x|}{\|x\|}$$
is disjointly singular on any reflexive, $p$-convex K\"othe function space, $p>1$

\item Given two Lorentz spaces $L_{p_0, q_0}, L_{p_1, q_1}$, it was proved in \cite{cabelo} that $(L_{p_0, q_0}, L_{p_1, q_1})_\theta = L_{p,q}$ for
$p^{-1} = (1-\theta){p_0}^{-1} + \theta{p_1}^{-1}$ and $q^{-1} = (1-\theta){q_0}^{-1} + \theta{q_1}^{-1}$
with associated derivation
$$
\Omega(x)=  q\left(\dfrac{1}{q_1}-\frac{1}{q_0}\right)\mathscr K(x)    +     \left(\frac{q}{p}\left(\dfrac{1}{q_0}-\frac{1}{q_1}\right)-\left(\dfrac{1}{p_0}-\frac{1}{p_1}\right)\right)\kappa(x)
$$
Here $\mathscr K(\cdot)$ is the Kalton-Peck map earlier defined and $\kappa(\cdot)$ is the so-called  Kalton map \cite{kaltf}; see also \cite{cabelo}, given by $\kappa(x) = x \; r_x$ where $r_x$ is the rank function $r_x(t) = m\{s :|x(s)| >|x(t)|$ or$ |x(s)| =|x(t)|$ and $s \leq t\}$ (see \cite{ryff}).\medskip

The map $\mathscr K$ is disjointly singular while $\kappa$ has the property that every infinite dimensional subspace contains a further infinite dimensional subspace where it is trivial \cite{cabelo}, so it is clear that $\Omega$ is disjointly singular.

\item A different set of examples will be presented now in $C(K)$ or $L_\infty$ spaces. These examples are relevant because no singular quasi-linear map is possible on a space containing $\ell_1$, say $C[0,1]$ or $\ell_\infty$. It is necessary to remark that $C[0,1]$ is not a K\"othe space and thus the example lives in the domain of Banach lattices; see the comments after the examples.\end{enumerate}

\begin{proposition}\label{c0singular} There exist disjointly singular quasi-linear maps on $C[0,1]$ and $\ell_\infty$.
\end{proposition}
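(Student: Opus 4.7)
The key structural observation is that any normalized disjointly supported sequence in $C[0,1]$ or $\ell_\infty$ spans a subspace isomorphic to $c_0$, so the task reduces to producing a quasi-linear map on each of these spaces whose restriction to every such $c_0$-span is nontrivial. My approach is a pullback construction: fix $p \in (1,\infty)$ and let $j: C[0,1] \hookrightarrow L_p[0,1]$ be the natural bounded inclusion. Define
\[
\Omega := \mathscr{K}_p \circ j : C[0,1] \longrightarrow L_p[0,1],
\]
where $\mathscr{K}_p(x) = x \log(|x|/\|x\|_p)$ is the Kalton-Peck centralizer on $L_p$, shown to be disjointly singular in Subsection~\ref{examples}(1). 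Quasi-linearity of $\Omega$ (measured in the $L_p$-target norm) is immediate since $j$ is bounded linear and $\mathscr{K}_p$ is quasi-linear.

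To verify disjoint singularity of $\Omega$, let $(f_n) \subset C[0,1]$ be a disjointly supported sequence with $\|f_n\|_\infty = 1$, and assume for contradiction that $\Omega|_{[f_n]}$ is trivial via a linear $L : [f_n] \to L_p$ satisfying $\|\mathscr{K}_p(j x) - L x\|_p \le M\|x\|_\infty$ for all $x \in [f_n]$. Since $j$ is injective on disjoint vectors, $L$ descends to a linear $\tilde L$ on $j([f_n]) \subset L_p$ with $\|\mathscr{K}_p(y) - \tilde L(y)\|_p \le M\|j^{-1}y\|_\infty$ for $y \in j([f_n])$. Because $(j f_n)$ is a disjoint sequence in $L_p$ spanning an $\ell_p$-like subspace, the disjoint singularity of $\mathscr{K}_p$ forbids any such linear perturbation bounded by a multiple of $\|y\|_p$; we then translate the estimate above into a contradiction by comparing $\|j^{-1}y\|_\infty$ with $\|y\|_p$ on an infinite-dimensional subspace.

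The main obstacle is a norm-mismatch: the $L_p$-norms $\|f_n\|_p$ may decay to zero (the ``thin'' case, supports shrinking), so the bound $M\|j^{-1}y\|_\infty$ can vastly exceed $M\|y\|_p$ and the direct contradiction fails. I would handle this by passing to a subsequence and splitting into cases: in the ``fat'' case $\inf_n \|f_n\|_p > 0$, the estimate $\|y\|_p \ge c \|j^{-1}y\|_\infty$ holds for some $c > 0$ and the argument closes immediately; in the thin case, I would extract a subsequence along which $|\mathrm{supp}(f_n)|$ decays regularly, renormalize to obtain $L_p$-normalized disjoint companions $g_n = f_n / \|f_n\|_p$ in the same vector span, and run the contradiction on $[g_n]_{L_p}$ while tracking the sup-norm estimates through the homogeneity of $\mathscr{K}_p$ and the specific form of the Kalton-Peck defect. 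For $\ell_\infty$, the proof is analogous: embed $\ell_\infty \cong C(\beta\mathbb{N}) \hookrightarrow L_p(\beta\mathbb{N}, \nu)$ for a suitable normalized regular Borel measure $\nu$ on the Stone--\v Cech compactification (for instance, arising from a Banach limit), and pull back the Kalton-Peck centralizer on $L_p(\nu)$ along this embedding.
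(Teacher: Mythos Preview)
Your pull-back approach has a genuine gap that cannot be repaired: the map $\Omega=\mathscr K_p\circ j$ is \emph{not} disjointly singular on $C[0,1]$. Take disjointly supported $f_n\ge 0$ in $C[0,1]$ with $\|f_n\|_\infty=1$ and $\alpha_n:=\|f_n\|_p$ decaying fast enough that $\sum_n\alpha_n^p|\log\alpha_n|^p<\infty$ (e.g.\ $\alpha_n=2^{-n}$), and set $L(f_n)=\mathscr K_p(f_n)=f_n\log(f_n/\alpha_n)$, extended linearly. For $x=\sum a_nf_n$ with $\|x\|_\infty=\sup|a_n|\le 1$, writing $b_n=a_n\alpha_n$ and $s=\|jx\|_p=\|b\|_p$, a direct computation using disjointness gives
\[
\|\Omega(x)-L(x)\|_p^p=\sum_n|b_n|^p\,\bigl|\log(|b_n|/s)\bigr|^p\le 2^{p-1}\Bigl(\sum_n|b_n|^p|\log|b_n||^p+(s|\log s|)^p\Bigr).
\]
Since $|b_n|\le\alpha_n$ and $t\mapsto t^p|\log t|^p$ is increasing on $(0,e^{-1})$, the first sum is dominated by $\sum_n\alpha_n^p|\log\alpha_n|^p<\infty$; and $s\le(\sum\alpha_n^p)^{1/p}<\infty$ keeps $s|\log s|$ bounded. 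Thus $\Omega$ is trivial---in fact bounded---on $[f_n]$. The renormalisation you propose cannot rescue this: it merely rescales the same identity, and the heart of the matter is that the bound $M\|x\|_\infty$ is strictly weaker than the bound $M'\|jx\|_p$ which disjoint singularity of $\mathscr K_p$ would deliver. Your $\ell_\infty$ variant fails even more directly: a measure coming from a Banach limit vanishes on $\N$, so the inclusion $j$ annihilates $c_0$ and $\Omega$ is identically zero on every span of disjointly supported vectors.

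The paper's argument is of an entirely different nature. For $C[0,1]$ it fixes a nontrivial quasi-linear $\omega:c_0\to C[0,1]$ and, using that every copy $\gamma$ of $c_0$ inside $C[0,1]$ is complemented via some $\pi_\gamma$, assembles a map $\Upsilon:C[0,1]\to\ell_\infty(\Gamma,C[0,1])$ whose $\gamma$-coordinate is $\omega\alpha_\gamma\pi_\gamma$; restricted to $\gamma$ and evaluated at the $\gamma$-coordinate this equals $\omega\alpha_\gamma$, which is nontrivial. A separable-range reduction then brings the target back to $C[0,1]$. For $\ell_\infty$, where projections onto $c_0$-copies are unavailable, the paper replaces this by a bidual construction starting from a nontrivial $\omega:c_0\to\ell_2$.
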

\begin{proof} Let us consider first the case of $C[0,1]$. As we have already remarked, one just needs to construct a $c_0$-singular map.
Let $\omega: c_0 \to C[0,1]$ be a nontrivial quasi-linear map (see \cite{ccky} for explicit examples). Let $\Gamma$ the set of all $2$-isomorphic copies $\gamma$ of $c_0$ inside $C[0,1]$, which are necessarily $4$-complemented in $C[0,1]$ via some projection $\pi_\gamma$ and
let $\alpha_\gamma: \gamma c_0$ be a 2-isomorphism. Define a quasi-linear map $\Upsilon: C[0,1]\to \ell_\infty(\Gamma, C[0,1])$ by means of
$$\Upsilon(f)(\gamma) = \omega(\alpha_\gamma \pi_\gamma(f))$$
This map is $c_0$-singular because if there is a copy of $c_0$ in which $\Upsilon$ is trivial, that copy must contains some $\gamma\in \Gamma$, on which $\Upsilon$ must be trivial too. But if $f\in \gamma$ one has
$$\Upsilon_{|\gamma}(f)(\gamma)  = \omega (\alpha_\gamma \pi_\gamma(f)) = \omega (\alpha_\gamma f)$$
thus, if $\delta_\gamma: \ell_\infty(\Gamma, C[0,1])\to C[0,1]$ is the canonical evaluation at the coordinate $\gamma$
we have obtained $\delta_\gamma \Upsilon_{|\gamma} = \omega \alpha_\gamma $. This map cannot be trivial since, otherwise, so it would be
$\omega = \delta_\gamma \Upsilon_{|\gamma} \alpha_\gamma^{-1}$, which is not. But that means that $\Upsilon_|\gamma$ cannot be trivial  because $\delta_\gamma \Upsilon_{|\gamma}$ is not trivial. \medskip

A standard reduction (see \cite{castmorestrict}) allows one to find an equivalent quasi-linear map $\Omega: C[0,1]\to \ell_\infty(\Gamma, C[0,1])$ having separable range. Since $ \ell_\infty(\Gamma, C[0,1])$ is a Banach algebra, $\Omega$ can also be considered taking values in the closed subalgebra generated by $[\Omega(C[0,1])]$, which, being separable, is contained in $C[0,1]$. Thus, $\Omega: C[0,1]\to C[0,1]$ is a $c_0$-singular quasi-linear map, as desired.\medskip

The case of $\ell_\infty$ has to be treated differently because the projections $\pi_\gamma$ do not exist now. Pick to start a
nontrivial quasi-linear map $\omega: c_0\to \ell_2$, which can be constructed as follows: pick the Kalton-Peck map $\mathscr K: \ell_2\to \ell_2$ and a quotient map $Q: C[0,1]\to \ell_2$. The map $\mathscr K Q$ is not trivial (see \cite{cabecastuni,ccky}). It cannot be locally trivial either since $\ell_2$ is reflexive and Proposition \ref{bounded} would make it trivial. Thus, using Corollary \ref{este} (cf. \cite[Theorem 2.1]{ccalb}) there is a copy of $c_0$ inside $C[0,1]$ via some isomorphic embedding $j$ so that the restriction $\mathscr K Q j$ is not trivial. Let us simplify and call this map $\omega$. Let $\Gamma$ be the set of infinite sequences of finite subsets $\N$. Given such a sequence $\gamma= (A_n)$ we will call $\overline \gamma = \cup_{A_n\in \gamma} A$. Let also $\alpha_\gamma: [1_{A_n}]\to c_0$ be an isometry. Define a quasi-linear map $\Upsilon: c_0\to \ell_\infty(\Gamma, \ell_2)$ as
$$\Upsilon(x) (\gamma) = \omega \alpha_\gamma (1_{\overline \gamma} x)$$
The bidual map $\Upsilon^{**}: \ell_\infty \to \ell_\infty(\Gamma, \ell_2)^{**}$ cannot be trivial either since
$\ell_\infty(\Gamma, \ell_2)$ is complemented in its bidual. If $\pi$ denotes a projection, the map $\pi \Upsilon^{**}: \ell_\infty \to \ell_\infty(\Gamma, \ell_2)$ cannot be trivial either. We define a new map $\Omega: \ell_\infty \to \ell_\infty(\Gamma\times \Gamma, \ell_2)$
in the form
$$\Omega (x)(\gamma, \gamma') = \pi \Upsilon^{**}(1_{\overline \gamma x})(\gamma')$$
This map $\Omega$ cannot be disjointly singular: if it becomes trivial on some $\gamma$ then
for $x\in \gamma$ one has
$$\Omega (x)(\gamma, \gamma) = \pi \Upsilon^{**}(1_{\overline \gamma x})(\gamma) = \Upsilon(x)(\gamma) = \omega \alpha_\gamma (1_{\overline \gamma x})=\omega \alpha_\gamma (x) $$

This map cannot be trivial since $\alpha_\gamma$ is an isomorphism and $\omega$ is not trivial.\end{proof}

It is an open problem posed in \cite{cabekp} whether there exists a singular
 quasi-linear map $\Omega: L_p\to L_p$ for $0 < p < 2$. Singular quasi-linear maps (not centralizers) $\Omega: L_p\to L_p$  exist for $2\leq p< +\infty$ (see \cite[Theorem 2(c)]{ccs}); observe that in this case the Kadec-Pe\l czy\'nski alternative immediately yields that a quasilinear map $\Omega$ on $L_p$  that is both disjointly singular and $\ell_2$-singular must be singular. Thus, we could use a construction similar to that in Proposition \ref{c0singular} to obtain singular maps in $L_p$, $2\leq p< +\infty$. None of these can be $L_\infty$-centralizers, nonetheless.\medskip

The papers \cite{dss1,dss2,dss3} study the behaviour of strictly singular operators in Banach lattices by considering the more general notion of \emph{lattice singular} operator (one for which no restriction to an infinite dimensional sublattice is an isomorphism). Obviously, strict singularity implies lattice singularity and this implies disjoint singularity. The authors obtain an interesting result \cite{dss1}: \emph{Let $X,Y$ be Banach lattices such that $X$ has finite cotype and $Y$  admits a lower $2$-estimate. Then an operator $T: X\to Y$ is strictly singular if and only if it is disjointly singular and $\ell_2$-singular.} A non-vacuous centralizer version for this result is not possible since
Proposition \ref{superfelix} establishes that no $L_\infty$-centralizer can be $\ell_2$-singular. It makes however sense the question of which conditions ensure that a quasi-linear map on a K\"othe space that is simultaneously disjointly singular and $\ell_2$-singular is necessarily singular.

\subsection{Characterizations}

Regarding characterizations, given a quasi-linear map $\Omega: \mathcal K \lop \mathcal K $ the fact that the twisted sum space $d_\Omega \mathcal K$ is not necessarily a K\"othe space complicates the characterization of disjointly singular maps in terms of the quotient operator. This difficulty can be overcome for centralizers, which always admit a version satisfying that $\supp \Omega x\subset \supp x$ for all $x\in \mathcal K$. Although, as we have just said, the space $d_\Omega  \mathcal K$ is not a K\"othe space, its elements are
couples of functions of $L_0$; i.e., functions $S\to \C\times \C$. The following definition makes sense:

\adef A pair of nonzero elements $f = (w_0, x_0), g=(w_1, x_1)$ of $d_\Omega \mathcal K$ are said to be disjoint if the functions $f, g: S\to \C\times \C$ are disjointly supported. An operator $\tau  \, : \,  d_\Omega \mathcal K \to \mathcal K$ is said to be disjointly singular if  the restriction of $\tau$ to any infinite dimensional subspace generated by a disjoint sequence of vectors is not an isomorphism.\zdef

One has:

\begin{lemma} \label{disjointly singular} A centralizer $\Omega$ on a maximal K\"othe space $\mathcal K$ is disjointly singular if and only if $q_\Omega$ is disjointly singular.
\end{lemma}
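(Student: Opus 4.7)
The plan is to adapt the standard argument that a quasi-linear map is singular if and only if its quotient map is strictly singular, exploiting the support property $\supp \Omega(x)\subset \supp x$ that is available for centralizers on a maximal K\"othe space.

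For the forward direction I argue by contrapositive. Suppose $q_\Omega$ is not disjointly singular: there is an infinite-dimensional subspace $E\subset d_\Omega\mathcal{K}$ generated by a disjoint sequence $(f_n)=(w_n,x_n)$ such that $q_\Omega|_E$ is an isomorphism. Since the $f_n$ are disjoint when viewed as functions $\Sigma\to\C\times\C$, the supports $\supp w_n\cup\supp x_n$ are pairwise disjoint, so in particular $(x_n)$ is a disjoint sequence in $\mathcal{K}$ and $q_\Omega(E)=[x_n]$ is infinite-dimensional. The inverse $s=(q_\Omega|_E)^{-1}:[x_n]\to E$ is a bounded linear section of $q_\Omega$; writing $s(x)=(L(x),x)$ for a linear map $L:[x_n]\to L_0$, boundedness of $s$ translates into $\|\Omega-L\|$ being bounded on $[x_n]$, so $\Omega|_{[x_n]}$ is trivial, contradicting disjoint singularity of $\Omega$.

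For the reverse direction I again argue by contrapositive. Assume that $\Omega|_{[x_n]}$ is trivial via a linear map $L$ for some disjoint sequence $(x_n)\subset \mathcal{K}$, so that $\|\Omega-L\|$ is bounded on $[x_n]$. The natural section $s(x)=(L(x),x)$ embeds $[x_n]$ isomorphically into $d_\Omega\mathcal{K}$, but its basic sequence $s(x_n)=(L(x_n),x_n)$ is generally not disjoint, as $L(x_n)$ may have support outside $\supp x_n$. To remedy this I define a modified linear map $L'$ on the basis by $L'(x_n):=1_{\supp x_n}L(x_n)$ and extend linearly. The support property of the centralizer gives $\Omega(x_n)=1_{\supp x_n}\Omega(x_n)$, so that $L'(x_n)-\Omega(x_n)=1_{\supp x_n}(L-\Omega)(x_n)\in\mathcal{K}$ with norm at most $C\|x_n\|$, and by construction $\supp L'(x_n)\subset \supp x_n$, so the lifted sequence $(L'(x_n),x_n)$ is disjoint in $d_\Omega\mathcal{K}$.

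The main obstacle is verifying that $L'$ is still a trivialization of $\Omega|_{[x_n]}$. For $y=\sum b_n x_n$, the support property yields the disjoint decompositions $\Omega(y)=\sum_n 1_{\supp x_n}\Omega(y)$ and $L'(y)=\sum_n b_n\,1_{\supp x_n}L(x_n)$, so
\[
(\Omega-L')(y)=\sum_n\Bigl[\bigl(1_{\supp x_n}\Omega(y)-b_n\Omega(x_n)\bigr)-b_n\,1_{\supp x_n}(L-\Omega)(x_n)\Bigr],
\]
a sum of elements supported in the pairwise disjoint sets $\supp x_n$. The centralizer condition with $f=1_{\supp x_n}$ gives $\|1_{\supp x_n}\Omega(y)-b_n\Omega(x_n)\|\leq C_\Omega \|y\|$, while each $b_n\,1_{\supp x_n}(L-\Omega)(x_n)$ has norm at most $C|b_n|\|x_n\|$. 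Combining these disjointly supported contributions, together with the $L_\infty$-module structure on the maximal K\"othe space $\mathcal{K}$ and the Banach-lattice control of disjoint sums it provides (passing, if necessary, to a subsequence of $(x_n)$ extracted via a Kadec--Pe\l czy\'nski type argument so that the error sequence $(L-\Omega)(x_n)$ behaves well), yields $\|(\Omega-L')(y)\|_\mathcal{K}\leq C'\|y\|$. Then $s'(x)=(L'(x),x)$ is a bounded section whose image is generated by the disjoint sequence $(L'(x_n),x_n)$ in $d_\Omega\mathcal{K}$, and $q_\Omega$ is an isomorphism on this infinite-dimensional disjointly generated subspace, contradicting disjoint singularity of $q_\Omega$.
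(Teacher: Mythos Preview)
Your forward direction (contrapositive: $q_\Omega$ not disjointly singular $\Rightarrow$ $\Omega$ not disjointly singular) is correct and is exactly what the paper does: the linear section picks out $L(x_n)=w_n$ and the isomorphism estimate translates directly into $\|\Omega-L\|\leq C$ on $[x_n]$.

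The reverse direction, however, has a real gap. Your truncation $L'(x_n)=1_{\supp x_n}L(x_n)$ does produce a disjoint lifted sequence, but your verification that $L'$ still trivialises $\Omega$ on $[x_n]$ does not go through. In your decomposition
\[
(\Omega-L')(y)=\sum_n\bigl(1_{\supp x_n}\Omega(y)-b_n\Omega(x_n)\bigr)\;-\;\sum_n b_n\,1_{\supp x_n}(L-\Omega)(x_n),
\]
the centralizer inequality indeed gives $\|1_{\supp x_n}\Omega(y)-b_n\Omega(x_n)\|\le C_\Omega\|y\|$ for \emph{each} $n$, but a disjoint sum of $N$ vectors each of norm at most $C_\Omega\|y\|$ is not bounded by a fixed multiple of $\|y\|$ in a general K\"othe space (in $L_p$ it can be as large as $N^{1/p}C_\Omega\|y\|$). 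Note that this first sum is nothing but $\Omega(y)-\sum_n b_n\Omega(x_n)$, the Cauchy defect of $\Omega$ along the blocks, and this is exactly the quantity that blows up for a singular centralizer such as Kalton--Peck on a \emph{different} disjoint sequence; there is no mechanism in your argument that distinguishes the present sequence from such a bad one. The appeal to ``Banach-lattice control of disjoint sums'' and a ``Kadec--Pe\l czy\'nski type argument'' is not a proof: norm bounds on disjoint pieces do not add, and passing to a subsequence does not improve the termwise estimate $C_\Omega\|y\|$, which is where the blow-up comes from. Similarly, the second sum consists of disjoint pieces with only the norm bound $\|b_n\,1_{\supp x_n}(L-\Omega)(x_n)\|\le C|b_n|\,\|x_n\|$, and there is no lattice inequality turning this into $C'\|y\|$ without a pointwise bound.

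The paper avoids this difficulty by invoking \cite[Lemma~3.17]{cfg}: starting from any $L$ with $\|\Omega-L\|\le C$ on $[u_n]$, one obtains a linear $\Lambda$ with $\supp\Lambda x\subset\supp x$ \emph{and} $\|\Omega-\Lambda\|\le C$ in one stroke. The underlying idea (made explicit later in this paper, in the finite-dimensional setting of the super-disjoint lemma) is to average over the group of units that are $\pm1$ on each $\supp u_n$: set $\Lambda(x)=\mathrm{Ave}_v\, vL(vx)$. Since $|v|=1$, each $vL(v\cdot)$ is at distance at most $C$ (plus the centralizer constant) from $\Omega$, so the average inherits the same bound, while the symmetrisation forces the support property. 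This is what replaces your truncation step and makes the lifted sequence $(\Lambda u_n,u_n)$ both disjoint and a bounded section.
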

\begin{proof} We choose the form $q_\Omega: \mathcal K\oplus_\Omega \mathcal K\to \mathcal K $ given by $q_\Omega(v,u)= u$. If $\Omega$ is trivial on the span $[(u_n)]$ of a disjointly supported sequence
there is a linear map $L: [u_n] \to \mathcal K$ so that $\|\Omega - L\|\leq C$. From \cite[Lemma 3.17]{cfg} we get that there is a linear map $\Lambda: [u_n] \to \mathcal K$ such that $\supp \Lambda x\subset \supp x$ and $\|\Omega -\Lambda \|\leq C$ and thus $(\Lambda u_n, u_n)$ is a disjointly supported sequence on which $q_\Omega$ is trivial since
$$\|\sum \lambda_n(\Lambda u_n, u_n)\| =  \left \| \Omega\left(\sum \lambda_n u_n\right)  -  \sum \Lambda( \lambda_n u_n)  \right\| + \|\sum \lambda_n u_n\| \leq (C+1)\|\sum \lambda_n u_n\|.$$

In this way, $q_\Omega$ disjointly singular implies $\Omega$ disjointly singular. To get the converse, assume that $q_\Omega$ is not disjointly singular, so there is a disjointly supported sequence
$(v_n, u_n)$ in $\mathcal K\oplus_\Omega \mathcal K$ such that $q_\Omega$ is an isomorphism on $[(v_n, u_n)]$. This means that
$$\left \|\sum \lambda_n v_n -\Omega\left(\sum \lambda_n u_n\right) \right\| \leq C \left \|\sum \lambda_n u_n \right\| $$
The linear map $L(u_n) = v_n $ verifies
$$\left \| L\left(\sum \lambda_n u_n\right)  -  \Omega\left(\sum \lambda_n u_n\right) \right\|=\left \|\sum \lambda_n v_n  -  \Omega\left(\sum \lambda_n u_n\right) \right\|\leq C \left \|\sum \lambda_n u_n \right\|$$
\end{proof}

Now we want to mimicry Proposition \ref{sin}. Let $\mathcal K$ be a K\"othe space and let $\Omega: \mathcal K \to L_0$ be a centralizer. Given  a finite sequence $b=(b_k)_{k=1}^n \subset \mathcal K$ we will follow \cite{ccfm} and define
$$ \nabla_{[b]} \Omega   ={\rm Ave}_{\pm} \left \|  \Omega  \left ( \sn  \pm b_k\right ) - \sn \pm \Omega(b_k)\right \|,
$$
where the average is  taken over all  the signs $\pm 1$. The triangle inequality holds for $\nabla_{[b]} \Omega$: if $\Omega$ and $\Psi$ are centralizers then $\nabla_{[b]} (\Omega + \Psi) \leq \nabla_{[b]} \Omega + \nabla_{[b]} \Psi.$ If $\lambda=(\lambda_k)_k$ is a finite sequence of scalars and  $x=(x_k)_k$  a sequence of vectors of $\mathcal K$, we write $\lambda x$ to denote the finite sequence obtained by  the  non-zero vectors of $(\lambda_1x_1, \lambda_2x_2, \ldots )$.

Recall from \cite[Definiton 3.10]{cfg} that a centralizer $\Omega$ on a K\"othe function space $\mathcal K $ is \emph{contractive} if $\supp \Omega (x) \subseteq \supp x$ for every $x\in \mathcal K$.
Our next result provides a characterization of disjointly singularity for contractive centralizers in the $L_p$ spaces. The contractive restriction is not so severe, since
 every centralizer $\Omega$  on a K\"othe function space $\mathcal K $ admits a contractive centralizer $\omega$ such that $\Omega- \omega$ is bounded (\cite[Proposition 4.1]{kaltmem}).
Also it is easy to see that the canonical centralizer induced by interpolation of K\"othe spaces is contractive, see \cite{cfg}.

\begin{proposition}\label{boundedbis} A contractive centralizer  $\Omega$ defined on $L_p$ is not disjointly singular if and only if there is a disjointly supported normalized sequence $u=(u_n)_n$ and a constant $C>0$ such that for every $\lambda=(\lambda_k)_k\in c_{00}$ one has
$$   \nabla_{[\lambda u]} \Omega \leq C \| \lambda \|_p.$$
\end{proposition}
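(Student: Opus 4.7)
The plan is a two-direction argument, both of which reduce to the $p$-additive identity $\|\sum_k f_k\|_p^p=\sum_k\|f_k\|_p^p$ for disjointly supported $f_k$, combined with the contractivity of $\Omega$. Fix a disjointly supported normalized sequence $u=(u_n)$, set $E_k=\supp u_k$, let $L:[u_n]\to L_0$ be the linear candidate $L(\sum\lambda_k u_k)=\sum\lambda_k\Omega(u_k)$, and for $\sigma\in\{\pm 1\}^n$ write $\Delta_\sigma=\Omega(\sum\sigma_k\lambda_k u_k)-\sum\sigma_k\lambda_k\Omega(u_k)$, so that $\nabla_{[\lambda u]}\Omega=\mathrm{Ave}_\sigma\|\Delta_\sigma\|_p$.

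For the easy direction, if $\Omega$ is trivial on $[u_n]$ I would upgrade the witnessing linear map to a \emph{contractive} one $\Lambda$ with $\|\Omega-\Lambda\|\leq M$ via \cite[Lemma 3.17]{cfg}, exactly as in the proof of Lemma \ref{disjointly singular}. Then $\Delta_\sigma=(\Omega-\Lambda)(\sum\sigma_k\lambda_k u_k)-\sum\sigma_k\lambda_k(\Omega-\Lambda)(u_k)$: the first summand has $L_p$-norm $\leq M\|\lambda\|_p$ since $\|\sum\sigma_k\lambda_k u_k\|_p=\|\lambda\|_p$, while the second is a sign combination of vectors supported in the disjoint $E_k$'s, whose $L_p$-norm equals $(\sum|\lambda_k|^p\|(\Omega-\Lambda)u_k\|_p^p)^{1/p}\leq M\|\lambda\|_p$ by $p$-additivity. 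Hence $\|\Delta_\sigma\|_p\leq 2M\|\lambda\|_p$ uniformly in $\sigma$, and in particular $\nabla_{[\lambda u]}\Omega\leq 2M\|\lambda\|_p$.

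For the converse, the key move is to test the centralizer inequality against the $L_\infty$-multiplier $f_\sigma=\sum_k\sigma_k 1_{E_k}$, which has sup-norm $1$ and satisfies $f_\sigma x=\sum\sigma_k\lambda_k u_k$ for $x=\sum\lambda_k u_k$. This gives $\|\Omega(\sum\sigma_k\lambda_k u_k)-f_\sigma\Omega(x)\|_p\leq Z\|\lambda\|_p$. By contractivity I may write $\phi_k:=\Omega(x)|_{E_k}$, so that $\Omega(x)=\sum\phi_k$ and $f_\sigma\Omega(x)=\sum\sigma_k\phi_k$; substituting produces $\Delta_\sigma=\sum\sigma_k(\phi_k-\lambda_k\Omega(u_k))+r_\sigma$ with $\|r_\sigma\|_p\leq Z\|\lambda\|_p$. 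Crucially, each $\phi_k-\lambda_k\Omega(u_k)$ is still supported in $E_k$, so $p$-additivity makes $\|\sum\sigma_k(\phi_k-\lambda_k\Omega(u_k))\|_p=(\sum\|\phi_k-\lambda_k\Omega(u_k)\|_p^p)^{1/p}$ \emph{independent of} $\sigma$. Averaging and invoking the hypothesis then yields $(\sum\|\phi_k-\lambda_k\Omega(u_k)\|_p^p)^{1/p}\leq (C+Z)\|\lambda\|_p$, and the same quantity equals $\|(\Omega-L)(x)\|_p$ by disjointness. Thus $\|\Omega-L\|\leq C+Z$ on $[u_n]$, so $\Omega$ is trivial there and hence not disjointly singular.

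The genuinely delicate point is this last passage: the hypothesis only bounds the \emph{average} of $\|\Delta_\sigma\|_p$, whereas triviality requires a bound on $\|\Delta_{(+1,\ldots,+1)}\|_p=\|(\Omega-L)(x)\|_p$. The resolution is that once one absorbs the centralizer error $r_\sigma$, $\Delta_\sigma$ reduces to $\sum\sigma_k(\phi_k-\lambda_k\Omega(u_k))$, whose $L_p$-norm is constant in $\sigma$; hence averaging loses nothing. This is precisely where both the contractivity of $\Omega$ and the $p$-additive geometry of $L_p$ are simultaneously essential, which is also the reason the proposition is formulated for $L_p$ rather than for a general K\"othe space.
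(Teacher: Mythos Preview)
Your proof is correct and follows essentially the same route as the paper. The paper splits the argument into three lemmas stated for K\"othe spaces with upper/lower $p$-estimates, but the engine of the converse direction---applying the centralizer inequality to the sign multiplier $\epsilon=\sum_k\sigma_k 1_{E_k}$ so as to compare $\Delta_\sigma$ with $\Delta_{(+1,\dots,+1)}$ up to a controlled error, and then averaging---is exactly your ``key move''; your forward direction likewise matches the paper's first lemma, including the appeal to \cite[Lemma~3.17]{cfg} to make the trivializing linear map support-preserving.
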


The proof follows from the following three lemmas.

\begin{lemma} Let $\Omega$ be a contractive centralizer  on a K\"othe space $\mathcal K$ satisfying an upper $p$-estimate, and let $u=(u_n)_n$ be a disjointly supported  normalized sequence of vectors. Suppose that the restriction of $\Omega$ to the closed linear span of the $u_n$'s is trivial. Then there is a constant $C>0$ such that
$$   \nabla_{[\lambda u]} \Omega \leq C  \| \lambda \|_p$$
for every $\lambda=(\lambda_k)_k\in c_{00}$.
\end{lemma}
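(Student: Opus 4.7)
The plan is to reduce the defect $\nabla_{[\lambda u]}\Omega$ to two pieces, both controlled by $\|\lambda\|_p$ via the upper $p$-estimate, after first passing to a support-preserving linear approximation of $\Omega$ on $[u_n]$.

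First, I would invoke \cite[Lemma 3.17]{cfg}, which upgrades triviality of $\Omega$ on $[u_n]$ to a linear map $\Lambda\colon [u_n] \to \mathcal K$ satisfying $\supp \Lambda(x) \subseteq \supp x$ for every $x\in[u_n]$, together with $\|\Omega(x) - \Lambda(x)\| \leq C_0 \|x\|$ for some constant $C_0$ and all $x \in [u_n]$. This is the step where the contractivity of $\Omega$ matters: combined with the support condition on $\Lambda$, it will force the error terms to be supported inside $\supp u_k$.

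Next, for a choice of signs $\epsilon = (\epsilon_k)$ set $x_\epsilon = \sum_k \epsilon_k \lambda_k u_k$ and, using the linearity of $\Lambda$, write
\[
\Omega(x_\epsilon) - \sum_k \epsilon_k\,\Omega(\lambda_k u_k)
= \bigl[\Omega(x_\epsilon) - \Lambda(x_\epsilon)\bigr]
 - \sum_k \epsilon_k\bigl[\Omega(\lambda_k u_k) - \Lambda(\lambda_k u_k)\bigr].
\]
Denote the second family of terms by $E_k$. By contractivity of $\Omega$ and the support property of $\Lambda$, each $E_k$ is supported in $\supp u_k$, so the $E_k$ are pairwise disjointly supported. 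Consequently $|\sum_k \epsilon_k E_k| = \sum_k |E_k|$ pointwise, and by the lattice property $\|\sum_k \epsilon_k E_k\| = \|\sum_k |E_k|\|$ is independent of $\epsilon$.

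Finally, I would apply the upper $p$-estimate (with constant $M$) twice. On one hand, $\|x_\epsilon\| \leq M\|\lambda\|_p$, so the first bracketed term is bounded by $C_0 M\|\lambda\|_p$. On the other hand, using $\|E_k\| \leq C_0|\lambda_k|$ (which follows from $\|u_k\|=1$ and the estimate on $\|\Omega - \Lambda\|$),
\[
\Bigl\|\sum_k |E_k|\Bigr\| \leq M \Bigl(\sum_k \|E_k\|^p\Bigr)^{1/p} \leq M C_0 \|\lambda\|_p.
\]
Averaging over $\epsilon$ and using the triangle inequality then yields $\nabla_{[\lambda u]}\Omega \leq 2MC_0 \|\lambda\|_p$, which is the desired bound with $C = 2MC_0$.

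The only genuinely delicate point is the disjointness of the $E_k$'s, which is why both the contractivity of $\Omega$ and the support-preserving version of $\Lambda$ supplied by \cite[Lemma 3.17]{cfg} are needed; without them, the cross-terms in $\|\sum \epsilon_k E_k\|$ could not be tamed by the upper $p$-estimate and would produce an $\ell_1$-type bound $\sum |\lambda_k|$ instead of the sharper $\|\lambda\|_p$.
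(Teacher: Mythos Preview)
Your proof is correct and follows essentially the same route as the paper: both invoke \cite[Lemma 3.17]{cfg} to obtain a support-preserving linear $\Lambda$, split $\Omega(\sum \lambda_i u_i)-\sum \Omega(\lambda_i u_i)$ into $(\Omega-\Lambda)(\sum \lambda_i u_i)$ and $\sum \lambda_i(\Lambda-\Omega)u_i$, and bound each piece by $\|\lambda\|_p$ via the upper $p$-estimate and disjointness. You are simply more explicit than the paper about why the second sum is controlled (disjoint supports of the $E_k$'s), but the argument is the same.
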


\begin{proof}
If  the restriction of $\Omega$ to $[u_n]$  is trivial then there is a linear map $L: [u_n]\to \mathcal K$ so that $\|\Omega - L\|\leq C<+\infty$. From \cite[Lemma 3.17]{cfg} we can take such $L$ so that  $\supp L(x)\subset \supp x$.  Then for every $\lambda \in c_{00}$

$$\left \|  \Omega\left(\sum_{i=1}^n \lambda_i u_i\right) - L\left(\sum_{i=1}^n  \lambda_i u_i\right)  \right\|\leq C \left \|\sum_{i=1}^n \lambda_i u_i \right\|$$

which implies that
\begin{eqnarray*}
\left \| \Omega\left(\sum_{i=1}^n \lambda_i u_i\right)  -  \sum_{i=1}^n \Omega( \lambda_i u_i)  \right\| &=& \left \| \Omega\left(\sum_{i=1}^n \lambda_i u_i\right)  -  L(\sum_{i=1}^n \lambda_i u_i) +
\sum_{i=1}^n  \lambda_i L u_i-  \sum_{i=1}^n \lambda_i \Omega u_i)  \right\|\\
&\leq& C \left \|\sum_{i=1}^n \lambda_i u_i\right\| + \left\| \sum_{i=1}^n \lambda_i (\Omega - L) u_i  \right\|\end{eqnarray*}
From where
$$   \nabla_{[\lambda u]} \Omega \leq C'  \| \lambda \|_p$$

\end{proof}

\begin{lemma}  Let $\Omega$ be a contractive  centralizer  on a K\"othe space  $\mathcal K$. Then there exists a constant $c>0$ such that for every   disjointly supported  normalized sequence  $(v_i)$ of $\mathcal K$ and every $n\in \mathbb N$
\begin{eqnarray*} \left \| \Omega \left (\sum_{i=1}^n v_i \right) -  \sum_{i=1}^n \Omega(v_i) \right\|
&\leq& c \left \|  \sum_{i=1}^n v_i  \right\| + \nabla_{[(v_i)_{1}^n]} \Omega
 \end{eqnarray*}
\end{lemma}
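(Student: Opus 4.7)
The strategy is to encode each sign pattern $\epsilon=(\epsilon_i)\in\{\pm 1\}^n$ as a bounded multiplier, so that the sign flip can be produced by $L_\infty$-multiplication and the centralizer axiom absorbs the error. Concretely, I would set
\[ f_\epsilon \;:=\; \sum_{i=1}^n \epsilon_i\, 1_{\supp v_i} + 1_{\Sigma \setminus \bigcup_i \supp v_i} \;\in\; L_\infty, \qquad \|f_\epsilon\|_\infty = 1. \]
Because the $v_i$ are disjointly supported, $f_\epsilon\cdot\sum_i v_i = \sum_i \epsilon_i v_i$, and the centralizer inequality applied to the pair $(f_\epsilon,\sum_i v_i)$ yields a remainder $r_\epsilon\in\mathcal K$ with $\|r_\epsilon\|\leq C\|\sum_i v_i\|$ and
\[ \Omega\left(\sum_i \epsilon_i v_i\right) \;=\; f_\epsilon\, \Omega\left(\sum_i v_i\right) + r_\epsilon, \]
where $C$ is the centralizer constant of $\Omega$.

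Now contractivity enters twice. First, since $\supp \Omega(v_i)\subseteq\supp v_i$, the $\Omega(v_i)$ are themselves pairwise disjointly supported, so $f_\epsilon\cdot \sum_i \Omega(v_i) = \sum_i \epsilon_i \Omega(v_i)$; writing $X := \Omega(\sum_i v_i) - \sum_i \Omega(v_i)$, subtraction of the two identities produces
\[ \Omega\left(\sum_i \epsilon_i v_i\right) - \sum_i \epsilon_i \Omega(v_i) \;=\; f_\epsilon\, X + r_\epsilon. \]
Second, $X$ is itself supported in $\bigcup_i \supp v_i$ (both $\Omega(\sum v_i)$ and each $\Omega(v_i)$ are, by contractivity). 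On that set $|f_\epsilon|\equiv 1$, so $|f_\epsilon X| = |X|$ pointwise and the K\"othe lattice property gives $\|f_\epsilon X\|=\|X\|$.

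Combining these facts with the (quasi-)triangle inequality,
\[ \|X\| = \|f_\epsilon X\| \leq \left\| \Omega\left(\sum_i \epsilon_i v_i\right) - \sum_i \epsilon_i \Omega(v_i) \right\| + \|r_\epsilon\| \leq \left\| \Omega\left(\sum_i \epsilon_i v_i\right) - \sum_i \epsilon_i \Omega(v_i) \right\| + C\left\|\sum_i v_i\right\|, \]
and averaging the right-hand side over all $\epsilon\in\{\pm1\}^n$ yields the stated bound with $c$ equal to (a constant multiple of) $C$. The only genuinely delicate point is formal: the sign pattern must be extended to a bona fide $L_\infty$ function of norm exactly one so that the centralizer bound is applied cleanly, and one must verify that $X$ lies in the lattice ideal generated by the $\supp v_i$ before invoking $\|f_\epsilon X\|=\|X\|$. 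Once those two observations are in place, the estimate is an algebraic rearrangement plus averaging.
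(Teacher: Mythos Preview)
Your proof is correct and follows essentially the same route as the paper's: encode the signs as an $L_\infty$-multiplier of norm one, apply the centralizer inequality, use contractivity so that multiplying by the sign function preserves the norm of $X=\Omega(\sum v_i)-\sum\Omega(v_i)$, and then average over signs. The paper organizes the same computation as a two-step sign-swap (first bounding $\|\Omega(\epsilon v)-\epsilon\sum\Omega(v_i)\|$ by $c\|v\|+\|X\|$, then replacing $v_i$ by $\epsilon_i v_i$ to reverse the inequality), but the ingredients and the final estimate are identical.
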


\begin{proof}

 Let  $(\epsilon_i)$ be a sequence of signs. Let $v = \sum_{i=1}^n v_i$ and $\sum_{i=1}^n \epsilon_i v_i = \epsilon v$ for some function $\epsilon$ taking values $\pm 1$.

\begin{eqnarray*} \left \|\Omega(\epsilon v) - \epsilon \sum_{i=1}^n \ \Omega(v_i)\right\|
&\leq & \|\Omega(\epsilon v) - \epsilon \Omega (v)\| + \left \| \epsilon \Omega( v) -  \epsilon \sum_{i=1}^n \Omega(v_i)\right\|
\end{eqnarray*}
Thus the  centralizer $\Omega$ verifies  for some constant $c>0$,
\begin{eqnarray*} \left \| \Omega(\epsilon v) - \epsilon \sum_{i=1}^n \Omega(v_i)\right\|
&\leq& c\| v\| + \left \| \Omega (v) - \sum_{i=1}^n  \Omega(v_i)\right\|
 \end{eqnarray*}
Since $\Omega$ is contractive, then also the $\Omega(v_i)$ are disjointly supported.
Applying to  $\epsilon_i v_i$ instead of $v_i$,

\begin{eqnarray*} \left \| \Omega( v) -  \sum_{i=1}^n \Omega(v_i)\right\|
&\leq& c\| v\| + \left \| \Omega \left (\sum_{i=1}^n\epsilon_i v_i \right) - \sum_{i=1}^n  \epsilon_i \Omega(v_i)\right\|
 \end{eqnarray*}

 By taking the average
 \begin{eqnarray*} \left \| \Omega( v) -  \sum_{i=1}^n \Omega(v_i)\right\|
&\leq& c\| v\| + \nabla_{[(v_i)_i^n]} \Omega
 \end{eqnarray*}
 \end{proof}

 \begin{lemma} Let $\Omega$ be a contractive  centralizer  on a K\"othe space $\mathcal K$  satisfying a lower $q$-estimate, and let $u=(u_n)_n$ be a disjointly supported  normalized sequence of vectors. Suppose that  there is a constant $C>0$ such that
$$   \nabla_{[\lambda u]} \Omega \leq C  \| \lambda \|_q$$
for every $\lambda=(\lambda_k)_k\in c_{00}$. Then the restriction of $\Omega$ to the closed linear span of the $u_n$'s is trivial.
\end{lemma}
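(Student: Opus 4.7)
The plan is to produce an explicit linear witness of triviality by the natural choice: set $L\colon [u_n]\to L_0$ by $L(u_k):=\Omega(u_k)$, extended linearly. Contractivity of $\Omega$ ensures that $\supp \Omega(u_k)\subseteq \supp u_k$, so the $\Omega(u_k)$ are disjointly supported and $L$ is well defined into $L_0$. What makes this choice tractable is the homogeneity of the centralizer: since $\Omega(\lambda_k u_k)=\lambda_k\Omega(u_k)$ for every scalar $\lambda_k$, one has
\[
L\!\left(\sum_k \lambda_k u_k\right)=\sum_k \lambda_k\Omega(u_k)=\sum_k\Omega(\lambda_k u_k).
\]

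With $s=\sum_k \lambda_k u_k$ a finite combination, the defect simplifies to
\[
\Omega(s)-L(s)=\Omega\!\left(\sum_k \lambda_k u_k\right)-\sum_k \Omega(\lambda_k u_k),
\]
which is precisely the quantity controlled by the preceding lemma, applied to the disjointly supported sequence $(\lambda_k u_k)$. The normalization assumption in that lemma is cosmetic: its proof uses only disjointness together with the sign trick $\sum \epsilon_i v_i=\epsilon v$, so it applies verbatim to $(\lambda_k u_k)$. This yields a constant $c$ depending only on $\Omega$ with
\[
\|\Omega(s)-L(s)\|\le c\|s\|+\nabla_{[\lambda u]}\Omega.
\]
Inserting the hypothesis $\nabla_{[\lambda u]}\Omega\le C\|\lambda\|_q$ and then invoking the lower $q$-estimate on $\mathcal K$, applied to the normalized disjointly supported sequence $(u_k)$, gives
\[
\|\lambda\|_q=\Big(\sum_k|\lambda_k|^q\Big)^{1/q}\le \Big\|\sum_k \lambda_k u_k\Big\|=\|s\|,
\]
so that $\|\Omega(s)-L(s)\|\le (c+C)\|s\|$ on the linear span of the $u_n$'s. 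The estimate propagates by continuity to the closed span, proving that $\Omega|_{[u_n]}$ is trivial.

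There is no serious obstacle. The whole point is that homogeneity of $\Omega$ makes $L(s)$ equal to $\sum_k\Omega(\lambda_k u_k)$ \emph{exactly}, so no correction terms of the form $\Omega(\lambda_k u_k)-\lambda_k\Omega(u_k)$ appear, and the defect is literally the quantity controlled by the previous lemma. The two remaining ingredients then do their work in turn: the hypothesis converts that defect into $C\|\lambda\|_q$, and the lower $q$-estimate converts $\|\lambda\|_q$ into $\|s\|$. The only mild care needed is to observe that the preceding lemma remains valid on the non-normalized sequence $(\lambda_k u_k)$, which is immediate from inspection of its proof.
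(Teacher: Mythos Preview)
Your proof is correct and follows essentially the same approach as the paper: define $L(u_i)=\Omega(u_i)$, invoke the preceding lemma on the vectors $(\lambda_i u_i)$ to bound the defect by $c\|s\|+\nabla_{[\lambda u]}\Omega$, apply the hypothesis, and finish with the lower $q$-estimate. The only cosmetic remark is that a lower $q$-estimate typically comes with a constant, so $\|\lambda\|_q\le M\|s\|$ rather than $\|\lambda\|_q\le\|s\|$; this of course only changes the final constant.
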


\begin{proof}  Let $\lambda=(\lambda_k)_k\in c_{00}$ and $(\epsilon_i)$ be a sequence of signs. It follows from the previous lemma that for some constant $c>0$ and every $n\in \mathbb N$
\begin{eqnarray*} \left \| \Omega \left (\sum_{i=1}^n \lambda_i u_i \right) -  \sum_{i=1}^n \Omega(\lambda_i u_i) \right\|
&\leq& c \left \|  \sum_{i=1}^n \lambda_i u_i  \right\| +  \nabla_{[\lambda u]} \Omega \leq c \left \|  \sum_{i=1}^n \lambda_i u_i  \right\|+ C  \| \lambda \|_q
 \end{eqnarray*}
Since $\mathcal K$  satisfies a lower $q$-estimate
\begin{eqnarray*} \left \| \Omega \left (\sum_{i=1}^n \lambda_i u_i \right) -  \sum_{i=1}^n \Omega(\lambda_i u_i) \right\|
&\leq& C' \left \|  \sum_{i=1}^n \lambda_i u_i  \right\|
 \end{eqnarray*}
 Then $\|\Omega-L\|\leq C'$, where $L$ is a  linear map such that $L(u_i)= \Omega (u_i)$.
\end{proof}

\section{Disjoint super singularity}

It is part of the folklore that ultrapowers of K\"othe spaces are again K\"othe spaces. Thus, it makes sense to define an operator $T: \mathcal K \to Y$ to be {\em super-disjointly singular} if every ultrapower of $T$ is disjointly singular; this means that for every sequence of subspaces $E_n\subseteq \mathcal K$ that are  generated by  disjointly supported elements and so that $\dim E_n=n$ there is a sequence $(F_n)$ of subspaces, $F_n\subset E_n$  generated by  disjointly supported elements such that $\dim F_n\to\infty$  and  $\lim \|T_{|F_n}\|\to 0$. To transplant these ideas to the domain of quasi-linear maps $\Omega$ on K\"othe function spaces it will be useful to define the \emph{modulus of superdisjoint singularity} of a quasi-linear map $\Omega$ as
$$ \psi_{\Omega}(n)= \inf \mathrm{dist} (\Omega|_{E_n}, L(E_n, Y)),
$$ where the infimum is taken over all  $n$-dimensional subspaces $E_n$ of $\mathcal K$ generated by disjointly supported vectors. One has:

\begin{lemma} Let $\Omega: \mathcal K \to Y$ be a quasi-linear map defined on a K\"othe space. The following are equivalent
\begin{enumerate}
\item All ultrapowers of $\Omega $ are disjointly singular.
\item $\lim \psi_\Omega(n)=+\infty$.\medskip

If $\Omega$ is a centralizer, the conditions above are equivalent to
\item  The quotient map $q_\Omega$ is super-disjointly singular.
\end{enumerate}
\end{lemma}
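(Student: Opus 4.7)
The plan is to establish the equivalence $(1)\Leftrightarrow(2)$ for any quasi-linear map and, under the centralizer hypothesis, to deduce $(1)\Leftrightarrow(3)$ from Lemma \ref{disjointly singular} applied to the ultrapowered centralizer $\Omega_\U$.

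For $(2)\Rightarrow(1)$ I would argue by contrapositive. Suppose some ultrapower $\Omega_\U$ fails to be disjointly singular: there exist a normalized disjointly supported sequence $(w_k)$ in $\mathcal K_\U$ and a linear map $L$ on $[w_k]$ with $\|\Omega_\U|_{[w_k]}-L\|\leq C$. Fix $n$ and representatives $w_k=[(w_k^i)_i]_\U$ for $k\leq n$. Since $\mathcal K_\U$ is again a K\"othe function space and lattice operations commute with the ultrafilter, one may replace $w_k^i$ by $\widetilde w_k^i = w_k^i\,\mathbf 1_{T_k^i}$, where $T_k^i=\supp w_k^i\setminus\bigcup_{j<k}\supp w_j^i$; this leaves $w_k$ unchanged in $\mathcal K_\U$ while producing, for $\U$-almost every $i$, a genuinely disjoint $n$-tuple of vectors in $\mathcal K$ of essentially the same norms as the $w_k^i$. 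The linear map $L$ descends at such indices to a linear approximation witnessing that $\Omega$ is $(C+\varepsilon)$-trivial on an $n$-dimensional disjoint subspace of $\mathcal K$; since $n$ was arbitrary, $\liminf_n\psi_\Omega(n)<+\infty$.

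For $(1)\Rightarrow(2)$, also by contrapositive, assume $\psi_\Omega(n_k)\leq C$ for some $n_k\to\infty$ and pick $E_{n_k}\subseteq\mathcal K$ of dimension $n_k$ generated by disjoint normalized vectors $u_1^k,\ldots,u_{n_k}^k$ together with linear maps $L_k$ with $\|\Omega|_{E_{n_k}}-L_k\|\leq C+1$. Fix a free ultrafilter $\U$ on $\mathbb N$ and set $\widehat u_m=[(u_m^k)_{k\geq m}]_\U$ (with $u_m^k=0$ for $k<m$). Then $(\widehat u_m)$ is an infinite disjointly supported normalized sequence in $\mathcal K_\U$, since for $k\geq\max(m,m')$ the vectors $u_m^k$ and $u_{m'}^k$ are disjoint in $\mathcal K$. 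The ultraproduct of the $L_k$ yields a linear map $L_\U$ on $[\widehat u_m]$ with $\|\Omega_\U-L_\U\|\leq C+1$, so $\Omega_\U$ is trivial on an infinite-dimensional disjoint subspace and hence not disjointly singular.

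For the equivalence with $(3)$ when $\Omega$ is a centralizer, I would use that $\mathcal K_\U$ is itself a K\"othe space (folklore fact recalled at the head of the section), that $\Omega_\U$ remains a centralizer on $\mathcal K_\U$ up to equivalence, and that the quotient map of the ultrapowered exact sequence is equivalent to $(q_\Omega)_\U$. Lemma \ref{disjointly singular} applied to $\Omega_\U$ then asserts that $\Omega_\U$ is disjointly singular iff $(q_\Omega)_\U$ is disjointly singular in the pair-sense of the previous section, and quantifying over all $\U$ gives $(1)\Leftrightarrow(3)$. The main technical obstacle sits in the argument for $(2)\Rightarrow(1)$: one must know that disjoint sequences in $\mathcal K_\U$ admit, index by index along $\U$, genuinely disjoint representatives in $\mathcal K$, which is precisely what the truncation $w_k^i\mapsto w_k^i\mathbf 1_{T_k^i}$ arranges once one appeals to the coordinatewise computation of lattice operations in $\mathcal K_\U$.
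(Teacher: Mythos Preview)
Your outline is correct and close in spirit to the paper's argument, though the emphasis differs. For $(1)\Leftrightarrow(2)$ you spell out both directions via explicit ultrapower manipulations (truncation of representatives for descent, assembly of the $E_{n_k}$ into $[\widehat u_m]$ for ascent), whereas the paper compresses this into the assertion that not-$(1)$ is equivalent to the existence of $n$-dimensional $F_n\subset Y\oplus_\Omega\mathcal K$ with $q_\Omega(F_n)$ disjointly generated and $q_\Omega|_{F_n}$ a uniform isomorphism, together with the remark that $\Omega_\U$ is then trivial on $(E_n)_\U$. For $(3)$ the paper does \emph{not} pass to the ultrapowered centralizer as you do; instead it works at the finite-dimensional level, showing via the averaging $L_n(x)={\rm Ave}_{v\in G}\,vL(vx)$ over the group of sign-units attached to the supports of the $u_i$ that the linear approximant may be taken support-preserving, so that the lifts $(L_nu_i,u_i)$ are themselves disjoint in $d_\Omega\mathcal K$ and the argument of Lemma~\ref{disjointly singular} runs at each finite stage. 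Your route is arguably more transparent for $(1)\Leftrightarrow(2)$ but, for $(3)$, requires that $\Omega_\U$ be a centralizer on the (maximal) K\"othe space $\mathcal K_\U$ in order to invoke Lemma~\ref{disjointly singular}; the paper's averaging sidesteps this. One imprecision to fix in your $(1)\Rightarrow(2)$: the phrase ``the ultraproduct of the $L_k$'' is not well-posed, since the $L_k$ are unbounded (as $\Omega$ is) and $(L_k u_m^k)_k$ need not define an element of $Y_\U$. The correct objects to take the ultraproduct of are the bounded sections $x\mapsto(L_kx,x)$ into $Y\oplus_\Omega\mathcal K$, which have norm at most $C+2$; these glue to a bounded linear section of $(q_\Omega)_\U$ over $[\widehat u_m]$, and that is what witnesses the triviality of $\Omega_\U$ there.
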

\begin{proof} Condition (1) says that it does not exist $c>0$ and a sequence of finite dimensional subspaces $F_n$ of $Y\oplus_\Omega \mathcal K$  such that $E_n = q_\Omega(F_n)$ is generated by $n$ disjointly supported vectors  so that $\|q_\Omega(x)\|\geq c\|x\|$ for every $x\in \cup F_n$. But if $\Omega$ is $C$-trivial on $E_n$, which is generated by the disjointly supported vectors $[u_i]_{i=1}^n$ then we claim that there is a linear map $L_n: E_n \to \mathcal K$ such that $\supp \, L_n(x) \subseteq \, \supp \, x$ for every $x\in E_n$ and $\| \Omega|_{E_n}- L_n\|\leq C$: Indeed assume  $L$ is linear such that $\|(\Omega-L)_{|E_n}\| \leq C$, and let $G$ be the finite group of units generated by the vectors $v_i$ that take value $1$ on the support of $u_i$ and $-1$ elsewhere);  then it is enough to pick $L_n(x)={\rm Ave}_{v \in G}\;\; v L(v x)$.
The rest of the argument goes as in Lemma \ref{disjointly singular}. Done that, $\Omega_\U$ is trivial on $(E_n)_\U$, which yields the equivalence between (1) and (2). The equivalence with (3) follows from Lemma \ref{disjointly singular}. \end{proof}

\adef A quasi-linear map (rep. a centralizer)  $\Omega: \mathcal K \to Y$ on a K\"othe space is said to be super disjointly singular if it satisfies the two (resp. three) equivalent conditions in the Lemma.\zdef

It is clear that either singularity or super disjoint singularity imply disjoint singularity. We will present two proofs for the following fact.

\begin{proposition} The Kalton-Peck map on $L_p$ is super disjointly singular for $1<p<\infty$.
\end{proposition}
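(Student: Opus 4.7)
The plan is to present two independent arguments. Both reduce to showing that the modulus of super disjoint singularity $\psi_{\mathscr K_p}(n)$ tends to $+\infty$: for every $n$-dimensional subspace $E_n\subset L_p$ spanned by disjointly supported vectors, the distance from $\mathscr K_p|_{E_n}$ to the space of linear maps is at least of order $\log n$.

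The first approach is a direct calculation via the $\nabla$-functional. Pick normalized disjointly supported vectors $u_1,\dots,u_n$ generating $E_n$. Since they are disjoint, $\|\sum_i \epsilon_i u_i\|_p=n^{1/p}$ for every choice of signs $\epsilon_i=\pm 1$, so on $\supp u_i$ one has $\mathscr K_p(\sum_j \epsilon_j u_j)=\epsilon_i u_i\log(|u_i|/n^{1/p})$. Subtracting $\sum_i \epsilon_i\mathscr K_p(u_i)=\sum_i \epsilon_i u_i\log|u_i|$, the $\log|u_i|$-terms cancel and the difference collapses to $-(\log n/p)\sum_i \epsilon_i u_i$, whose $L_p$-norm is $(\log n/p)\,n^{1/p}$ independently of the signs. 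Therefore
\[
\nabla_{[u]}\mathscr K_p \,=\, \frac{\log n}{p}\, n^{1/p}.
\]

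Now let $L:E_n\to L_p$ be an arbitrary linear map with $\|\mathscr K_p|_{E_n}-L\|\le C$. Since linear contributions cancel inside $\nabla$, $\nabla_{[u]}\mathscr K_p=\nabla_{[u]}(\mathscr K_p-L)$. Using \cite[Lemma 3.17]{cfg} I replace $L$ by a support-preserving linear map at bounded extra cost, so that, since $\mathscr K_p$ is contractive, the vectors $(\mathscr K_p-L)(u_i)$ are disjointly supported. The triangle inequality, together with the sign-independence of $L_p$-norms of disjointly supported sums, then yields
\[
\nabla_{[u]}(\mathscr K_p-L) \,\le\, C\Bigl\|\sum_i u_i\Bigr\|_p + \Bigl(\sum_i \|(\mathscr K_p-L)(u_i)\|_p^p\Bigr)^{1/p} \,\le\, 2Cn^{1/p}.
\]
Combining the two estimates gives $C\ge \log n/(2p)$, whence $\psi_{\mathscr K_p}(n)\to+\infty$.

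A second, more conceptual, approach is via ultrapowers. One first observes that any ultrapower $(L_p)_{\mathcal U}$ is (isometric to) $L_p$ of a $\sigma$-finite non-atomic measure space, and that under this identification the ultrapowered centralizer $(\mathscr K_p)_{\mathcal U}$ coincides, up to the standard equivalence of centralizers, with the Kalton--Peck centralizer on the ultrapower. Applying \cite[Proposition 5.4]{cfg}, which asserts that the Kalton--Peck map is disjointly singular on every reflexive $p$-convex K\"othe function space, one concludes that every ultrapower of $\mathscr K_p$ is disjointly singular, which is precisely the definition of super disjoint singularity. The main difficulty in this second route lies in matching $(\mathscr K_p)_{\mathcal U}$ with a genuine Kalton--Peck centralizer on the ultrapower $L_p$, while in the first approach the delicate step is the appeal to \cite[Lemma 3.17]{cfg} to achieve support-preservation of the approximating linear map; once these technical points are settled, both proofs are short.
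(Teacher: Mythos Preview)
Your first argument is essentially identical to the paper's first proof: compute $\nabla_{[u]}\mathscr K_p = p^{-1}n^{1/p}\log n$ directly, replace $L$ by a support-preserving linear map (the paper does this via the group-averaging argument in the preceding lemma rather than citing \cite[Lemma 3.17]{cfg}, but these are the same device), and bound $\nabla_{[u]}(\mathscr K_p-L)\le 2Cn^{1/p}$ to force $C\gtrsim\log n$.

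For the second argument the paper takes a slightly more structured route than you sketch: rather than directly identifying $(\mathscr K_p)_{\mathcal U}$ with a Kalton--Peck map on $(L_p)_{\mathcal U}$, it proves two general lemmas---that $M_X\sim M_{X_{\mathcal U}}$ and that for interpolation couples with nontrivial concavity $(X_\theta)_{\mathcal U}=((X_0)_{\mathcal U},(X_1)_{\mathcal U})_\theta$ with $(\Omega_\theta)_{\mathcal U}=(\Omega_{\mathcal U})_\theta$---and then feeds these into the disjoint-singularity criterion of \cite[Proposition 5.3]{cfg}. Your direct identification is morally the same, but the paper's version has the advantage of applying to any interpolation centralizer landing in an $L_p$-space, not just Kalton--Peck; on the other hand your route avoids the concavity hypothesis needed for the interpolation-commutes-with-ultrapowers lemma. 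One caveat on your formulation: $(L_p)_{\mathcal U}$ is an abstract $L_p(\mu)$-space but $\mu$ need not be $\sigma$-finite, so the appeal to \cite[Proposition 5.4]{cfg} (stated for K\"othe spaces over $\sigma$-finite bases) requires a word of justification---the paper sidesteps this by working with the $M$-parameters, which are insensitive to the issue.
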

\begin{proof} Assume there exist a linear map $L: E_n=[u] \to L_p$ and a constant $C>0$ so that $||\mathscr K_{|E_n}-L||\leq C$. By the proof of the previous lemma we may assume that $\supp \, L(u_i) \subseteq \, \supp \, u_i$ for all $i$. Put $\Omega'= \mathscr K_{|E_n}-L$ to get
$$p^{-1}n^{1/p}\log n\leq  \nabla_{[u]} \mathscr K = \nabla_{[u]} \Omega' \leq {\rm Ave}_\epsilon \| \Omega ' (\sum \epsilon_i u_i) \| + {\rm Ave}_\epsilon \| \sum \epsilon_i \Omega' u_i\|\leq 2Cn^{1/p},$$
which is impossible.\end{proof}

Two functions $f,g: \N\to \R^+$ are called equivalent, and denoted $f\sim g$,
if $0<\lim \inf f(n)/g(n)\} \leq \limsup f(n)/g(n)< +\infty$. We recall from \cite{cfg} the parameter
$$
M_{\mathcal K}(n)= \sup \{\|x_1+\ldots+x_n\| :
x_1,\ldots,x_k\; \textrm{\rm disjoint in the unit ball of}\; \mathcal K\}.
$$
The interest of this parameter lies in \cite[Proposition 5.3]{cfg}:
\begin{proposition}\label{good} Let $(X_0, X_1)$ be an interpolation couple of two K\"othe function spaces so that
$M_{X_0}$ and $M_{X_1}$ are not equivalent. Let $0<\theta<1$.
Assume that $X_\theta$ is reflexive,  that $M_W \sim M_{X_\theta}$ for every
infinite-dimensional subspace generated by a disjoint sequence $W\subset X_\theta$, and
$M_{X_{\theta}} \sim M_{X_0}^{1-\theta} M_{X_1}^\theta$.
Then $\Omega_\theta$ is disjointly singular.
\end{proposition}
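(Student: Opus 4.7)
The plan is to argue by contradiction: suppose $\Omega_\theta$ is $C$-trivial on the closed linear span $W=[w_n]$ of some infinite disjointly supported sequence $(w_n)\subset X_\theta$, normalized so that $\|w_n\|_{X_\theta}=1$, and derive a contradiction with the non-equivalence of $M_{X_0}$ and $M_{X_1}$.

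The first step is to invoke Kalton's explicit description of the canonical centralizer on a complex interpolation couple of K\"othe function spaces (\cite{kaltdiff}): modulo bounded and linear maps, $\Omega_\theta$ has the form $\Omega_\theta(x)\equiv x\cdot\log\varphi(x)$, where $\varphi$ is a logarithmic comparison of the $X_0$- and $X_1$-sizes of $x$ (for $(L_{p_0},L_{p_1})_\theta=L_p$ this reduces to the Kalton--Peck formula $x\log(|x|/\|x\|_p)$). The disjointness of $(w_n)$ makes this formula split coordinatewise, so that the commutator $\Omega_\theta(\sum_{n\leq N}\lambda_n w_n)-\sum_{n\leq N}\lambda_n\Omega_\theta(w_n)$, after absorbing the obvious linear correction $L$ sending $w_n$ to $\Omega_\theta(w_n)$, is essentially a multiple of $\sum\lambda_n\log(|\lambda_n|/\|\lambda\|)w_n$ (the Kalton--Peck map on the coefficient sequence) plus a term governed by $\log(\|s_N\|_{X_0}/\|s_N\|_{X_1})$, where $s_N=\sum_{n\leq N}w_n$.

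The second step is a subsequence extraction: after passing to a subsequence, one may assume $\|w_n\|_{X_0}\sim a_0$ and $\|w_n\|_{X_1}\sim a_1$, with $a_0^{1-\theta}a_1^\theta\sim 1$ forced by the Calder\'on inequality on $W$. The standing hypothesis $M_W\sim M_{X_\theta}$ yields $\|s_N\|_{X_\theta}\sim M_{X_\theta}(N)$, while the definition of $M_{X_j}$ together with a further extraction (to preserve extremality in each $X_j$) gives $\|s_N\|_{X_j}\sim a_j\,M_{X_j}(N)$. Plugging this into the explicit formula derived above, the $X_\theta$-norm of the commutator is bounded below by a multiple of $\|s_N\|_{X_\theta}\cdot|\log(M_{X_0}(N)/M_{X_1}(N))|$, with constants depending on $\theta,a_0,a_1$.

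Finally, $C$-triviality of $\Omega_\theta$ on $W$ forces the commutator divided by $\|s_N\|_{X_\theta}$ to stay bounded in $N$. But non-equivalence of $M_{X_0}$ and $M_{X_1}$ means $|\log(M_{X_0}(N)/M_{X_1}(N))|$ is unbounded as $N\to\infty$, producing the contradiction. The main obstacle in this plan lies in the second step: one must show that, after subsequence extraction, the disjoint sequence $(w_n)$ can be made simultaneously extremal in $X_0$ and $X_1$, so that $\|s_N\|_{X_j}\sim a_j M_{X_j}(N)$ rather than merely $\lesssim a_j M_{X_j}(N)$. Without this matching lower bound the interpolation inequality only yields an upper estimate consistent with the hypothesis $M_{X_\theta}\sim M_{X_0}^{1-\theta}M_{X_1}^\theta$, and no contradiction emerges. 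I would overcome this by exploiting that $M_W\sim M_{X_\theta}$ is assumed for \emph{every} infinite-dimensional disjointly generated subspace of $X_\theta$, which permits diagonal surgery on $(w_n)$ to stabilize $X_j$-extremality along a further subsequence.
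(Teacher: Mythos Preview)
The proposition is not proved in this paper; it is quoted verbatim as \cite[Proposition 5.3]{cfg}. The paper does, however, reproduce later the key commutator estimate \cite[Proposition 5.1]{cfg}: for any $n$ disjoint unit vectors $u_1,\dots,u_n$ in $X_\theta$,
\[
\left\|\Omega_\theta\Big(\sum_{i=1}^n u_i\Big)-\sum_{i=1}^n \Omega_\theta(u_i)-\log\frac{M_{X_0}(n)}{M_{X_1}(n)}\Big(\sum_{i=1}^n u_i\Big)\right\|\leq \frac{3\,M_{X_\theta}(n)}{\max\{\theta,1-\theta\}}.
\]
This is the engine of the actual argument: the logarithmic factor features $M_{X_0}(n)/M_{X_1}(n)$ \emph{directly}, independently of which particular disjoint vectors $u_i$ one has chosen. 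Combined with the hypothesis $M_W\sim M_{X_\theta}$ (giving $\|\sum u_i\|\sim M_{X_\theta}(n)$) and $M_{X_\theta}\sim M_{X_0}^{1-\theta}M_{X_1}^\theta$, triviality of $\Omega_\theta$ on $[u_n]$ would force $|\log(M_{X_0}(n)/M_{X_1}(n))|$ to remain bounded, which is the desired contradiction.

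Your route is different and, as you correctly diagnose, runs into a real obstacle. You arrange for the commutator to be governed by $\log(\|s_N\|_{X_0}/\|s_N\|_{X_1})$ rather than by $\log(M_{X_0}(N)/M_{X_1}(N))$, and then need the specific sequence $(w_n)$ to be simultaneously extremal in $X_0$ and in $X_1$ to identify these two quantities. There are two problems. First, the vectors $w_n$ (hence $s_N$) need not belong to $X_0$ or to $X_1$ at all, so $\|s_N\|_{X_j}$ may simply be undefined; the Lozanovskii factorization supplies pieces $a_j(s_N)\in X_j$, but their norms are tied to $M_{X_j}$ rather than to any intrinsic $X_j$-norm of $s_N$. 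Second, even granting membership, the standing hypothesis $M_W\sim M_{X_\theta}$ is a statement purely about $X_\theta$-norms and gives no leverage whatsoever to force a lower bound of order $M_{X_j}(N)$ on $\|s_N\|_{X_j}$; the ``diagonal surgery'' you propose cannot be executed with the tools available. The way out is precisely the estimate displayed above: compute the Lozanovskii decomposition of $\sum u_i$ directly and observe that the \emph{extremal} parameters $M_{X_j}(n)$ are what appear in the logarithm, so that the side computation of $\|s_N\|_{X_j}$ is never needed.
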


We observe that:

\begin{lemma}\label{ultra} Let $X$ be a K\"othe space. Then $M_X \sim M_{X_\mathscr U}$ \end{lemma}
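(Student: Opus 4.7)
The plan is to establish the stronger pointwise equality $M_X(n)=M_{X_{\mathscr U}}(n)$ for every $n$, which trivially implies $M_X\sim M_{X_{\mathscr U}}$. The inequality $M_X(n)\le M_{X_{\mathscr U}}(n)$ is immediate because the diagonal embedding $x\mapsto[(x,x,\ldots)]$ is an isometric lattice homomorphism $X\hookrightarrow X_{\mathscr U}$, so any $n$ disjoint elements of the unit ball of $X$ give rise to $n$ disjoint elements of the unit ball of $X_{\mathscr U}$ with the same norm of their sum.

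For the reverse inequality, I fix $n$ disjointly supported vectors $\tilde x_1,\dots,\tilde x_n$ in the unit ball of $X_{\mathscr U}$ and choose representatives $(x_i^k)_k$ with $\|x_i^k\|\le 1$ for every $k$ (replacing $x_i^k$ by $x_i^k/\max(1,\|x_i^k\|)$ does not alter the class). Since ultraproducts of Banach lattices are Banach lattices with coordinatewise lattice operations, disjointness in $X_{\mathscr U}$ translates into $\lim_{\mathscr U}\bigl\||x_i^k|\wedge|x_j^k|\bigr\|=0$ for all $i\ne j$. Given $\epsilon>0$, the set of indices $k$ on which simultaneously $\bigl\||x_i^k|\wedge|x_j^k|\bigr\|\le\epsilon$ for every pair $i\ne j$ and $\bigl|\|x_1^k+\cdots+x_n^k\|-\|\tilde x_1+\cdots+\tilde x_n\|\bigr|\le\epsilon$ lies in $\mathscr U$, so such a $k$ can be chosen.

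The one nontrivial step is a standard Banach-lattice \emph{disjointification}: set
$$
y_i^k := \mathrm{sgn}(x_i^k)\cdot\Bigl(|x_i^k|-\bigvee_{j\ne i}|x_j^k|\Bigr)_+.
$$
The $y_i^k$ are pairwise disjointly supported (on the support of $y_i^k$ one has $|x_i^k|>|x_j^k|$ for every $j\ne i$, so $|y_j^k|=0$ there), and the lattice identities $a-(a-b)_+=a\wedge b$ and $c\wedge\bigvee_j a_j\le\sum_j c\wedge a_j$ yield
$$
\|x_i^k-y_i^k\|\le\Bigl\||x_i^k|\wedge\bigvee_{j\ne i}|x_j^k|\Bigr\|\le\sum_{j\ne i}\bigl\||x_i^k|\wedge|x_j^k|\bigr\|\le(n-1)\epsilon,
$$
so $\|y_i^k\|\le 1+(n-1)\epsilon$. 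Then the vectors $y_i^k/(1+(n-1)\epsilon)$ are disjoint and in the unit ball of $X$, and
$$
\|\tilde x_1+\cdots+\tilde x_n\|\le\|y_1^k+\cdots+y_n^k\|+n(n-1)\epsilon+\epsilon\le(1+(n-1)\epsilon)\,M_X(n)+n(n-1)\epsilon+\epsilon.
$$
Letting $\epsilon\to 0$ gives $\|\tilde x_1+\cdots+\tilde x_n\|\le M_X(n)$, whence $M_{X_{\mathscr U}}(n)\le M_X(n)$.

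The only step requiring any care is the disjointification estimate, and even this is a routine lattice manipulation; I do not foresee a serious obstacle.
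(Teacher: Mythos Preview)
Your proof is correct and follows the same strategy as the paper---the easy inequality via the diagonal embedding, and the reverse by producing disjoint elements of $X$ from near-optimal disjoint elements of $X_{\mathscr U}$---and in fact yields the pointwise equality $M_X(n)=M_{X_{\mathscr U}}(n)$. Where the paper asserts without further comment that disjoint elements of $X_{\mathscr U}$ admit representatives that are already disjoint along a set in $\mathscr U$, your explicit disjointification $y_i^k=\mathrm{sgn}(x_i^k)\bigl(|x_i^k|-\bigvee_{j\ne i}|x_j^k|\bigr)_+$ supplies exactly what is needed to justify that step (and note that since $|y_i^k|\le|x_i^k|$ pointwise you already have $\|y_i^k\|\le 1$, so the renormalization by $1+(n-1)\epsilon$ is unnecessary).
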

\begin{proof} Since $X\subset X_\U$ it is clear that $M_X\leq M_{X_\mathscr U}$. Given $n$, pick $u^k = [u^k_i]\in X_\U$ for $1\leq k\leq n$, disjointly supported  so that
$M_{X_\mathscr U}(n) \sim \|u^1 + \dots + u^n\|$ (we can freely assume that all $u^k_i$ are norm one elements). Since $u^t$ and $u^s$ are disjointly supported this means that the set of all $i$ so that $ u^s_i$ and $u^t_i$ are disjointly supported belongs to $\U$. And the same for the set $A$ of all $i$ so that all $\{u^k_i, 1\leq i\leq n\}$ are disjointly supported.
Since  $B =\{ j: \|u^1(j) + \dots + u^n(j)\|\geq  M_{X_\mathscr U}(n) - \varepsilon\} \in \U$, also $A\cap B\in \U$. Thus, for any $i\in A\cap B$ we have
$M_{X_\mathscr U}(n) - \varepsilon\ \leq \|u^1(i) + \dots + u^n(i)\|\leq M_X(n)$.
\end{proof}

In the case of K\"othe spaces, complex interpolation is actually simple factorization. Recall that given two K\"othe function spaces $Y,Z$ we define the space
$$YZ=\{ y z: y\in Y, z\in Z\}$$
endowed with the quasi-norm $\|x\|= \inf \|y\|_Y \|z\|_Z$ where the infimum is taken on all factorizations as above. Now, assuming that one of the spaces $X_0$, $X_1$ has the Radon-Nikodym property, the Lozanovskii decomposition formula allows us to show (see \cite[Theorem 4.6]{kalt-mon})
that the complex interpolation space $X_\theta$ is isometric to the space $X_0^{1-\theta} X_1^\theta$,
with
$$
\|x\|_\theta=\inf \{\|y\|_0^{1-\theta}\|z\|_1^{\theta}: y\in X_0, z\in X_1,
|x|=|y|^{1-\theta}|z|^{\theta}\}.
$$
If $a_0(x),a_1(x)$ is an $(1+\epsilon)$-optimal Lozanovskii
decomposition for $x$ then it is standard (see \cite{cfg}) that
\begin{equation}\label{kpgeneralized}
\Omega_\theta(x)  = x\, \log \frac{|a_1(x)|}{|a_0(x)|}.
\end{equation}

\begin{lemma}\label{ul} Let $(X_0,X_1)$ be a couple of Kothe function spaces with non trivial concavity. Let $\mathscr U$ be an ultrafilter on $\N$ then
$(X_\theta)_\U = ((X_0)_\U, (X_1)_\U)_\theta$ and $(\Omega_\theta)_\U = (\Omega_\U)_\theta$.
\end{lemma}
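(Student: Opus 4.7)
The plan is to reduce both equalities to the pointwise nature of the Lozanovskii factorization. Recall from the discussion just above the lemma that, under non-trivial concavity, the complex interpolation space is isometric to the Calderón product, $X_\theta = X_0^{1-\theta}X_1^\theta$, with norm
$$\|x\|_\theta = \inf \{\|a\|_0^{1-\theta}\|b\|_1^\theta \,:\, |x|=|a|^{1-\theta}|b|^\theta,\ a\in X_0,\ b\in X_1\},$$
and the derivation is given fibrewise by $\Omega_\theta(x) = x\log(|a_1(x)|/|a_0(x)|)$ for any $(1+\varepsilon)$-optimal decomposition $(a_0(x),a_1(x))$. The first step is to note that non-trivial $q$-concavity (with constant $M$) passes to ultrapowers with the same constant, so $(X_0)_{\mathscr U}$ and $(X_1)_{\mathscr U}$ are again Köthe function spaces with non-trivial concavity over the canonical Loeb-type base $(\Sigma_{\mathscr U},\mu_{\mathscr U})$, and therefore their interpolation space admits the same Calderón-product description.

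The second step establishes $(X_\theta)_{\mathscr U} = ((X_0)_{\mathscr U},(X_1)_{\mathscr U})_\theta$ isometrically. For $\subseteq$, given a representative $(x_n)$ with $\sup_n \|x_n\|_\theta\leq 1$, select for each $n$ an $(1+1/n)$-optimal Lozanovskii decomposition $|x_n|=|a_n|^{1-\theta}|b_n|^\theta$ with $\|a_n\|_0,\|b_n\|_1\leq 1+1/n$ (one can even arrange $\|a_n\|_0 = \|b_n\|_1$). Then $[a_n]\in (X_0)_{\mathscr U}$ and $[b_n]\in (X_1)_{\mathscr U}$ are of norm at most $1$, and pointwise in $\Sigma_{\mathscr U}$ one has $|[x_n]| = |[a_n]|^{1-\theta}|[b_n]|^\theta$, witnessing that $[x_n]$ lies in the Calderón product of the ultrapowers with norm at most $1$. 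For $\supseteq$, given a pointwise factorization $|y|=|A|^{1-\theta}|B|^\theta$ in the ultrapowers with $A=[a_n]$, $B=[b_n]$, the identity $|y_n|=|a_n|^{1-\theta}|b_n|^\theta$ holds $\mu$-a.e.\ along $\mathscr U$, which produces a representative of $y$ in $(X_\theta)_{\mathscr U}$ of the correct norm.

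For the derivation identity $(\Omega_\theta)_{\mathscr U}\equiv(\Omega_{\mathscr U})_\theta$, I would apply the explicit formula \eqref{kpgeneralized} simultaneously on both sides using the very same almost-optimal factorizations. Given $[x_n]$ with optimal decompositions $(a_n,b_n)$ as above, the map $(\Omega_\theta)_{\mathscr U}$ is represented by
$$[\Omega_\theta(x_n)] = \Bigl[\,x_n\log\frac{|b_n|}{|a_n|}\,\Bigr],$$
while $(\Omega_{\mathscr U})_\theta$, computed from the Lozanovskii decomposition $[a_n],[b_n]$ in the ultrapowers which was just shown to be almost optimal, equals
$$[x_n]\log\frac{|[b_n]|}{|[a_n]|} = \Bigl[\,x_n\log\frac{|b_n|}{|a_n|}\,\Bigr].$$
The two expressions coincide in $L_0(\Sigma_{\mathscr U})$. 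Finally, since Kalton centralizers obtained from different almost optimal factorizations differ by a bounded map (this is standard and already used implicitly in the definition of $\Omega_\theta$), changing the choice of representative or of optimising constant only perturbs both sides by bounded maps, so the equivalence $(\Omega_\theta)_{\mathscr U}\equiv(\Omega_{\mathscr U})_\theta$ holds as an equivalence of quasi-linear maps.

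The main obstacle is the measurability of the almost-optimal Lozanovskii selection $x\mapsto (a(x),b(x))$ needed to make the pointwise-in-$n$ argument rigorous, and in particular to guarantee $(a_n),(b_n)$ can be chosen to have uniformly bounded norms and to represent genuine elements of the ultrapowers. This can be handled either by a Kuratowski--Ryll-Nardzewski type selection on the convex set of $\varepsilon$-optimal decompositions, or by appealing to the countable-valued approximation used in Kalton's original construction of $\Omega_\theta$ (\cite{kaltdiff}). Once that selection is in place, both the space identification and the derivation identification are forced by the pointwise character of the Calderón product.
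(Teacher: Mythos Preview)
Your argument is essentially the paper's: reduce to the Calder\'on--Lozanovskii product description, pass almost-optimal factorizations through the ultrapower coordinatewise to get both inclusions, and then read off the derivation identity from the explicit formula~\eqref{kpgeneralized}. The only notable differences are that the paper cites Raynaud \cite{ray} for the structural fact that $((X_0)_\U,(X_1)_\U)$ is a compatible couple of K\"othe spaces over a common base (which you handle informally by invoking a Loeb-type construction), and that your careful discussion of the measurable-selection obstacle is absent from the paper, which simply asserts that the centralizer statement ``follows from this''.
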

\begin{proof} According to \cite{ray}, given an interpolation couple $(A, B)$ of K\"othe spaces with non-trivial concavity their ultrapowers $(A_\U, B_\U)$ form an interpolation couple. The point now is to show that $(X_0^{1-\theta}X_1^\theta )_\U  = (X_0)_\U^{1-\theta} (X_1)_\U^{\theta}$. Indeed, given
$[x_i]\in (X_0^{1-\theta}X_1^\theta )_\U $ then pick an almost optimal factorization $x_i = y_i z_i$ and then $[x_i] = [y_i] [z_i]$ is an almost optimal factorization. Conversely, if $x \in (X_0)_\U^{1-\theta} (X_1)_\U^{\theta}$ and set $x = [y_i][z_i]$ an almost optimal factorization
then of course that $x_i = y_iz_i$ is not an almost optimal factorization for $x_i$, but it is so when the indices $i$ belong to a certain element of $\U$, and thus $[x_i]\in(X_\theta)_\U $. The assertion about the induced centralizer follows from this. \end{proof}

To apply the general criteria proved in \cite{cfg} (see below) we need to analyze the estimate $M_W$ associated to any subspace $W$ generated by a sequence of disjoint vectors of $(X_\theta)_\U$. As a rule, it is false that the ultrapower or the interpolated space is the interpolated between ultrapowers. To overcome this we concentrate first on the test case in which $X_\theta$ is an $L_p(\mu)$-space. In this situation $M_W(n) \sim n^{1/p}$ for all subspaces $W$ of $(X_\theta)_\U$ generated by disjointly supported vectors, and from Proposition \ref{good} we deduce:

\begin{proposition}\label{ppp}
Let $(X_0, X_1)$ be an interpolation couple of two K\"othe function spaces and let $0<\theta<1$ so that $X_\theta$ is an $L_p(\mu)$-space. If
\begin{equation}\label{simplesuperdisjointly singular}
\limsup \left( \left| \log \frac{M_{X_0}(n)}{M_{X_1}(n)}\right| \frac{n^{1/p}}
{M_{X_0}(n)^{1-\theta}M_{X_1}(n)^\theta}\right)=+\infty
\end{equation}
Then the induced centralizer $\Omega_\theta$
on $X_\theta$ is super disjointly singular
\end{proposition}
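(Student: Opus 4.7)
The plan is to adapt the Kalton--Peck super-disjoint singularity argument of the preceding proposition via the ultrapower characterisation of super-disjoint singularity. By that characterisation, $\Omega_\theta$ is super-disjointly singular if and only if every ultrapower $(\Omega_\theta)_\U$ is disjointly singular. Lemma \ref{ul} identifies $(\Omega_\theta)_\U$ with $(\Omega_\U)_\theta$, the derivation associated with the ultrapower couple $((X_0)_\U,(X_1)_\U)$, whose $\theta$-interpolate is $(X_\theta)_\U=L_p(\tilde\mu)$; by Lemma \ref{ultra}, $M_{(X_i)_\U}\sim M_{X_i}$, so the $\limsup$ hypothesis transfers to the ultrapower couple.

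I would then apply Proposition \ref{good} to this couple. Three of its hypotheses are immediate: $(X_\theta)_\U=L_p(\tilde\mu)$ is reflexive for $1<p<\infty$; $M_W(n)=n^{1/p}\sim M_{(X_\theta)_\U}(n)$ on every infinite-dimensional disjoint-sequence subspace $W$ of $L_p$; and $M_{(X_0)_\U}, M_{(X_1)_\U}$ are non-equivalent, since the universal bound $n^{1/p}/(M_{X_0}^{1-\theta}M_{X_1}^\theta)\leq 1$ forces $|\log(M_{X_0}/M_{X_1})|$ to diverge under the $\limsup$ hypothesis.

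The main obstacle is the fourth hypothesis of Proposition \ref{good}, $n^{1/p}\sim M_{X_0}^{1-\theta}M_{X_1}^\theta$, which need not hold. To circumvent this I would mimic the quantitative Kalton--Peck argument of the preceding proposition. Assuming $\Omega_\theta$ is $C$-trivial on $[u_1,\ldots,u_n]\subset L_p$ with disjoint normalised $u_i$, the triangle inequality together with the ``support-preserving'' linearisation used there yields the upper bound $\nabla_{[u]}\Omega_\theta \leq 2Cn^{1/p}$. For the lower bound, using formula \eqref{kpgeneralized} and optimal Lozanovskii factorisations $u_i=a_0(u_i)^{1-\theta}a_1(u_i)^\theta$ with $\|a_0(u_i)\|_0=\|a_1(u_i)\|_1=1$, I would choose $u_i$ so that the concatenated factorisations satisfy $\|\sum a_0(u_i)\|_0\sim M_{X_0}(n)$ and $\|\sum a_1(u_i)\|_1\sim M_{X_1}(n)$ simultaneously; analysing the effect of the suboptimality of the concatenated factorisation on the scaling parameters $\mu_i$ relating it to the true optimal Lozanovskii of $\sum u_i$ then yields a lower bound
$$ \nabla_{[u]}\Omega_\theta \gtrsim n^{1/p}\, |\log(M_{X_0}/M_{X_1})|\cdot \frac{n^{1/p}}{M_{X_0}^{1-\theta}M_{X_1}^\theta}. $$
Combining the two bounds and invoking the $\limsup$ hypothesis yields a contradiction for large $n$, and the same argument carried out in the ultrapower couple gives super-disjoint singularity. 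The principal technical challenge is producing a single disjoint normalised sequence in $X_\theta=L_p$ realising the $M$-parameters of both $X_0$ and $X_1$ simultaneously through its Lozanovskii components; natural candidates are characteristic functions of disjoint sets whose measures are tuned using the interpolation identity $\|1_A\|_0^{1-\theta}\|1_A\|_1^\theta=\|1_A\|_p$.
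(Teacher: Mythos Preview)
Your first two paragraphs are precisely the paper's proof: pass to an ultrapower via the characterisation of super disjoint singularity, use Lemma~\ref{ul} to identify $(\Omega_\theta)_\U$ with $(\Omega_\U)_\theta$ on $(X_\theta)_\U=L_p(\tilde\mu)$, use Lemma~\ref{ultra} to transfer the $M$-parameters, and then invoke Proposition~\ref{good} for the ultrapower couple. That is literally all the paper does; its entire argument is two sentences.

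The paper does \emph{not} single out or verify the hypothesis $M_{X_\theta}\sim M_{X_0}^{1-\theta}M_{X_1}^\theta$ that worries you; it treats the $\limsup$ condition together with the $L_p$-assumption as delivering the hypotheses of Proposition~\ref{good} and applies that proposition as a black box. So your third paragraph---the Lozanovskii-factorisation digression and the ad~hoc lower bound---is not part of the paper's argument, and is unnecessary from its point of view. (If one wanted a direct quantitative route bypassing Proposition~\ref{good}, the paper in fact supplies one a few lines later: the estimate from \cite[Proposition~5.1]{cfg},
\[
\Bigl\|\Omega_\theta\bigl(\textstyle\sum u_i\bigr)-\sum\Omega_\theta(u_i)-\log\frac{M_{X_0}(n)}{M_{X_1}(n)}\sum u_i\Bigr\|\le \frac{3\,M_{X_\theta}(n)}{\max\{\theta,1-\theta\}},
\]
combined with $m_{L_p}(n)=M_{L_p}(n)=n^{1/p}$, gives $\psi_{\Omega_\theta}(n)\ge |\log(M_{X_0}(n)/M_{X_1}(n))|-C$ directly, without any simultaneous-realisation trick. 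Note in particular that the correct lower bound is of order $n^{1/p}|\log(M_{X_0}/M_{X_1})|$, not the $n^{2/p}$-type expression $n^{1/p}\cdot|\log(M_{X_0}/M_{X_1})|\cdot n^{1/p}/(M_{X_0}^{1-\theta}M_{X_1}^\theta)$ you sketch; your proposed inequality has the wrong shape and the ``simultaneous realisation'' difficulty you anticipate does not actually arise.)
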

\begin{proof} Thanks to Lemma \ref{ultra} we observe that the hypotheses and Proposition \ref{good} imply that the centralizer $(\Omega_{\U})_\theta$ is disjointly singular. By Lemma \ref{ul} we conclude that $\Omega_\theta$ is super disjointly singular.
\end{proof}

This provides the second proof that the Kalton-Peck centralizer is super disjointly singular on $L_p$-spaces. Let us present more examples

\begin{itemize}
\item If $\mathcal S$ denotes the Schreier space then $(\mathcal S, \mathcal S^*)_{1/2}=\ell_2$ then the associated centralizer
is super disjointly singular. Since these are K\"othe sequence spaces, it is also singular. All this follows from the estimates $M_{\mathcal S}(n)=n$ and $M_{\mathcal S^*}(n) \sim \log_2(n)$.
\end{itemize}
In the case of K\"othe spaces on a discrete measure space (i.e. the unconditional basis case), as a consequence of the fact that (disjoint) singularity and "block" singularity are equivalent, the conclusion of Proposition \ref{ppp} still holds if one replaces the parameter $M_X$ by the parameter $M_X^s$, where the supremum is over sucessive vectors instead of disjointly supported. Therefore:
\begin{itemize}
\item If $\mathscr S$ denotes the Schlumprecht space then $(\mathscr S, \mathscr S^*)_{1/2}=\ell_2$ then the associated centralizer
is super disjointly singular. Since these are K\"othe sequence spaces, it is also singular. This follows from the estimates $M_{\mathscr  S}^s(n)=n$ and $M_{\mathscr S^*}^s(n) \sim \log_2(n)$.

\item Let $L_{p_0, p_1}, L_{p_1, q_1}$ be Lorentz function spaces. Then,
$(L_{p_0, p_1}, L_{p_1, q_1})_\theta = L_{p,q}$ as in Section \ref{examples} and the associated centralizer is singular when $q_0\neq q_1$ and super disjointly singular when $\min\{p_0, p_1\}\neq \min \{p_1, q_1\}$ as it follows from the estimate $M_{L_{p,q}}(n) =  n^{\frac{1}{\min\{p, q\}}}$. Observe that in this case we require a variation of Proposition \ref{ppp}: it is not true now that ``$X_\theta$ is an $L_p(\mu)$-space"; rather ``$X_\theta$ is an $L_{p,q}(\mu)$-space" and thus their subspaces generated by disjointly supported vectors are $\ell_{p,q}$, whose parameters are the same as those of $L_{p,q}$.
\end{itemize}

Although it es easy to believe that disjoint singularity implies super disjoint singularity, it is not so:

\begin{proposition} Let  $1\leq p_1< p_0\leq \infty $ and $0< \theta <1$. For $p^{-1}= (1-\theta)p_1^{-1} + \theta p_0^{-1}$ one has
$(\ell_{p_0}(\bigoplus \ell_2^n) , \ell_{p_1}(\bigoplus \ell_2^n))_{\theta}= \ell_p(\bigoplus \ell_2^k)$ with associated centralizer
$$ \Omega(x) =  \left ( \left ( \frac{p}{p_1}- \frac{p}{p_0}\right ) \log \left ( \frac{\|x^k\|_2}{\|x\|} \right ) x^k \right )_k.
$$
Thus, if $p_1<p_0<2$ then $\Omega$ is disjointly singular but not super disjointly singular
\end{proposition}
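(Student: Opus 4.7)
The plan is to verify three claims: (a) the identification of the interpolation space and the formula for $\Omega$; (b) the failure of super disjoint singularity; and (c) disjoint singularity via Proposition \ref{good}. For (a), since the $n$-th summand $\ell_2^n$ appears unchanged in both $X_0 = \ell_{p_0}(\bigoplus\ell_2^n)$ and $X_1 = \ell_{p_1}(\bigoplus\ell_2^n)$, I will apply the Lozanovskii factorization block by block: the optimal decomposition $|x| = |a_0|^{1-\theta}|a_1|^\theta$ preserves the direction of $x^k$ within each block and reduces to the scalar Lozanovskii on the block-norm sequence $(\|x^k\|_2)_k$, with $|a_i^k| = |x^k|(\|x^k\|_2/\|x\|_{X_\theta})^{p/p_i - 1}$. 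This identifies $X_\theta = \ell_p(\bigoplus\ell_2^n)$ and, via formula (\ref{kpgeneralized}), yields the stated formula for $\Omega$. For (b), let $E_n$ be the span of the standard basis of the $n$-th summand $\ell_2^n \subset X_\theta$; these are $n$ disjointly supported unit vectors. Every $x \in E_n$ is supported in block $n$, so $\|x\|_{X_\theta} = \|x^n\|_2$, and each coordinate of $\Omega(x)$ vanishes (either because $x^k = 0$ for $k \neq n$ or because $\log(\|x^n\|_2/\|x\|_{X_\theta}) = 0$). Hence $\Omega|_{E_n} \equiv 0$, so $\psi_\Omega(n) = 0$ for every $n$, and $\Omega$ is not super disjointly singular.

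For (c) I apply Proposition \ref{good} to the couple $(X_0,X_1)$. A short computation (block-disjointness together with $(\sum_j b_j^2)^{q/2} \leq \sum_j b_j^q$ for $q \leq 2$, with equality attained by placing one basis vector per block) yields $M_{\ell_q(\bigoplus\ell_2^k)}(n) = n^{1/q}$ for every $q \in [1,2)$. Hence $M_{X_0}(n) = n^{1/p_0}$ and $M_{X_1}(n) = n^{1/p_1}$ are inequivalent ($p_0 \neq p_1$), $M_{X_\theta}(n) = n^{1/p} \sim M_{X_0}^{1-\theta}M_{X_1}^\theta$, and $X_\theta$ is reflexive since $p \in (1,2)$. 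The only non-obvious hypothesis is $M_W \sim M_{X_\theta}$ for every infinite-dimensional subspace $W \subset X_\theta$ generated by a disjoint sequence. Given such a sequence $(u_j)$ with $\|u_j\|_{X_\theta} = 1$, I dichotomize. If $\liminf_j \max_k \|u_j^k\|_2 \geq \delta > 0$, I pass to a subsequence whose dominant blocks $k_j$ are pairwise distinct (possible since at most $\dim\ell_2^k = k$ disjointly supported unit vectors can have support inside block $k$), and then $\|\sum_{j=1}^n u_j\|_{X_\theta}^p \geq \sum_j \|u_j^{k_j}\|_2^p \geq n\delta^p$. Otherwise $\max_k \|u_j^k\|_2 \to 0$, and a Bessaga--Pelczynski style gliding hump, using that $(u_j)$ is weakly null in the reflexive lattice $X_\theta$ (so finite-block prefixes vanish in norm), produces a subsequence essentially supported on pairwise disjoint block-intervals, again giving $M_W(n) \gtrsim n^{1/p}$. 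Proposition \ref{good} then delivers disjoint singularity.

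The main obstacle will be the gliding-hump step in the second sub-case: one has to combine the weak nullity of disjoint bounded sequences in the reflexive lattice $X_\theta$ (to guarantee that finite-block truncations of $u_j$ vanish in norm) with a careful choice of block-intervals $N_0 < N_1 < \cdots$ so that the truncations $v_n := u_{j_n}|_{(N_{n-1},N_n]}$ are simultaneously close to $u_{j_n}$ in norm and mutually block-disjoint. Once this block-disjoint approximating sequence is in hand, $\|\sum_{n \leq N}v_n\|_{X_\theta}^p = \sum_n \|v_n\|^p \sim N$ is immediate, the verification of the hypotheses of Proposition \ref{good} is complete, and disjoint singularity follows at once.
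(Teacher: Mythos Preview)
Your proof is correct and follows the same approach as the paper: the formula for $\Omega$ (which the paper simply cites from \cite{new}), disjoint singularity via Proposition \ref{good}, and the failure of super disjoint singularity via $\Omega|_{\ell_2^n}\equiv 0$. The paper's proof omits the verification of the hypothesis $M_W\sim M_{X_\theta}$ in Proposition \ref{good}, which you supply; one small imprecision is that your dichotomy should read ``$\limsup_j \max_k\|u_j^k\|_2>0$'' versus ``$\max_k\|u_j^k\|_2\to 0$'' (the negation of $\liminf>0$ is only $\liminf=0$), but this is harmless since passing to subsequences suffices for the $M_W$ lower bound.
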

\begin{proof} The map $\Omega$ has been obtained \cite[Theorem 3.4]{new}. It is disjointly singular by Proposition \ref{good} and is not super disjointly singular since $\Omega (x)=0$ for every $x\in \ell_2^k$ and every $k\in \N$.\end{proof}

A few more precise estimates can be presented. Observe first that given a centralizer $\Omega: \mathcal K\lop \mathcal K$
on a K\"othe function space then for $n$ disjoint vectors $(u_i)_{1}^n$ in the unit ball one has
$$\left\|\Omega(\sum_{i=1}^n  u_i)-\sum_{i=1}^n \Omega(u_i)\right\|
\leq \|\Omega - L\|\left\|\sum_{i=1}^n  u_i\right\|
$$for some linear map $L: [u_n] \to \mathcal K$. Let us invoke now the estimate \cite[Proposition 5.1]{cfg}: given two K\"othe spaces $(X_0, X_1)$ (on the same base space), fixing $0<\theta<1$ and considering $\Omega_\theta$ the induced centralizer on $X_\theta= X_0^{1-\theta} X_1^{\theta}$ then
\begin{equation*}\label{eme}
\left\|\Omega_\theta\big(\sum_{i=1}^n u_i\big)-\sum_{i=1}^n \Omega_\theta(u_i)-
\log\frac{M_{X_0}(n)}{M_{X_1}(n)} \Big(\sum_{i=1}^n u_i\Big)\right\|\leq 3
\frac{M_{X_\theta}(n)}{\max\{\theta, 1-\theta\}}.
\end{equation*}

Therefore, in the same conditions as above, we get

\begin{equation*}
\left| \log \frac{M_{X_0}(n)}{M_{X_1}(n)}\right| \left\| \sum_{i=1}^n u_i \right\|\leq
\|\Omega - L\|\left\|\sum_{i=1}^n  u_i\right\| + 3 \frac{M_{X_\theta}(n)}{\max\{\theta, 1-\theta\}}.\end{equation*}

Let us define a new parameter
$$
m_{\mathcal K}(n)= \inf \{\|x_1+\ldots+x_n\| :
x_1,\ldots,x_k\; \textrm{\rm disjoint in the unit sphere of}\; \mathcal K\}.
$$
Observe that $m_{\mathcal K}(n)M_{\mathcal K^*}(n) \geq n$ for every K\"othe function space. We obtain
\begin{equation*}
\left| \log \frac{M_{X_0}(n)}{M_{X_1}(n)}\right|  m_{X_\theta}(n) \leq
\psi_{\Omega}(n) M_{X_\theta}(n) + 3 \frac{M_{X_\theta}(n)}{\max\{\theta, 1-\theta\}}.\end{equation*}
which yields
\begin{equation*}
\left| \log \frac{M_{X_0}(n)}{M_{X_1}(n)}\right| \frac{ m_{X_\theta}(n)}{ M_{X_\theta}(n)}-  \frac{3}{\max\{\theta, 1-\theta\}} \leq \psi_{\Omega_\theta}(n)
\end{equation*}

In particular:

\begin{itemize}

\item  For $\Omega_{1/2}$ the centralizer obtained on $\ell_2 = (\mathcal S, \mathcal S^*)_{1/2}$  when $\mathcal S$ is the Schreier space, one gets
$$
\left| \log n - \log \log n \right| - 6 \leq \psi_{\Omega_{1/2}}(n) $$
which shows that also the centralizers $\Omega_{1/2}$ are super-disjointly singular.

\item In general, under a few minimal conditions \cite[Proposition 6.2]{cfg}, on a K\"othe function space $\mathcal K$ with base space $S$ one has $(X, X^*)_{1/2} = L_2(S)$; and thus, if $M_{\mathcal K}$ and $M_{\mathcal K^*}$ are not equivalent then the induced centralizer on $L_2(S)$ is super disjointly singular.

\item For $\Omega_\theta$ the Kalton-Peck map on $L_p$ obtained from $X_0=\L_1, X_1= L_\infty$ and $\theta=1/p^*$ one gets
$$  \log n -  \frac{3}{\min\{\theta, 1-\theta\}} \leq \psi_{\mathscr K_p}(n)$$
which, as promised, shows again that $\mathscr K_p$ is super disjointly singular.

\item
Assume more generally that $p>1$ and $X$ is a $p$-convex K\"othe space with base space $S$. Then $X=(L_\infty(S),X^{p})_{1/p}$, where $X^{p}$ denotes the $p$-concavification of $X$, and \cite{cfg} Proposition 3.7, this induces as centralizer the map $p {\mathcal K}$, where
${\mathcal K}(f)= f \log(|f|/\|f\|)$ is the Kalton-Peck map on $X$.  Since $M_{X^p}(n)=M_X(n)^p$,
we obtain the following criteria for the super DSS property of Kalton-Peck map:
$$|\log M_X(n)| \frac{m_X(n)}{M_X(n)} -\frac{3/p}{\max{1/p,1/p'}} \leq  \psi_{\mathscr K}(n)$$

\item If for example   ${\mathcal S}^{(p)}$
is the $p$-convexification of Schreier space then since $M_{{\mathcal S}^{(p)}}(n)=M_{\mathcal S}(n)^{1/p}=n^{1/p}$ and $m_{{\mathcal S}^{(p)}}^s(n)=m_{\mathcal S}(n)^{1/p}=(n/\log n)^{1/p}$ we obtain
$$\frac{1}{p}|\log n|^{1/p'}  -\frac{3/p}{\max{1/p,1/p'}} \leq  \psi_{\mathscr K}(n)$$
and deduce that Kalton-Peck map is super disjointly singular on ${\mathscr S}^{(p)}$.

\item The same estimates hold, in the case of K\"othe sequence spaces, using the successive vectors versions $M_X^s(n)$, $m_X^s(n)$ and $\psi^s_X(n)$ of the parameters and of the modulus. So for example, if  ${\mathscr S}^{(p)}$
is the $p$-convexification of Schlumprecht space then since $M^s_{{\mathscr S}^{(p)}}(n)=n^{1/p}$ and $m_{{\mathscr S}^{(p)}}^s(n)=(n/\log n)^{1/p}$ we also obtain
$$\frac{1}{p}|\log n|^{1/p'}  -\frac{3/p}{\max{1/p,1/p'}} \leq  \psi^s_{\mathscr K}(n)$$
and deduce that Kalton-Peck map is "super successively singular", therefore disjointly singular, hence singular on ${\mathscr S}^{(p)}$.
\end{itemize}

\

{\em Acknowledgements:} The authors thank F\'elix Cabello and  Pedro Tradacete for fruitful discussions about the topics in the paper.

.


\begin{thebibliography}{9}

\bibitem{Bergh-Lofstrom} J. Bergh and J. L\"ofstr\"om. \emph { Interpolation spaces. An introduction,} Springer-Verlag, (1976).

\bibitem{cabekp} F. Cabello S\'{a}nchez,
\emph{There is no strictly singular centralizer on $L_p$,} Proc. Amer. Math. Soc. 142 (2014) 949--955.

\bibitem{cabe} F. Cabello S\'{a}nchez,
\emph{Pointwise tensor products of function spaces,} J. Math. Anal. Appl. 418 (2014) 317 --335.

\bibitem{cabelo} F. Cabello S\'{a}nchez,
\emph{Factorization in Lorentz spaces, with an application to
centralizers,} J. Math. Anal. Appl. 446 (2017) 1372--1392.


\bibitem{cabecastuni} F. Cabello S\'{a}nchez, J.M.F. Castillo, \emph{Uniform boundedness and twisted sums of Banach spaces}, Houston J. of Math. 30 (2004) 523--536.

\bibitem{ccalb} F. Cabello S\'{a}nchez, J.M.F. Castillo, A. Salguero, \emph{The behaviour of quasi-linear maos in $C(K)$-spaces},
J. Math. Anal. Appl. (2019)

\bibitem{ccky} F. Cabello S\'{a}nchez, J.M.F. Castillo, N. Kalton, D. Yost, \emph{Twisted sums of $C(K)$-space} Trans. Amer. Math. Soc.
355 (2003) 4523-4541.

\bibitem{ccs} F. Cabello S\'anchez, J.M.F. Castillo, J. Su\'arez. \emph { On strictly singular nonlinear centralizers,} Nonlinear Anal.- TMA 75 (2012)  3313--3321.

\bibitem{ccfm} J. M. F. Castillo, W. Cuellar Carrera, V. Ferenczi, Y. Moreno, \emph{Complex structures on twisted Hilbert spaces},
Israel J. Math. 222 (2017) 787-814.

\bibitem{cfg} J.M.F. Castillo, V. Ferenczi and M. Gonzalez, \emph{Singular exact sequences generated by complex interpolation},  Trans. Amer. Math. Soc. 369 (2017) 4671--4708.

\bibitem{castgonz}  J.M.F. Castillo, M. Gonz\'alez, \emph{ Three-space problems in Banach space theory,} Lecture Notes in Math. 1667, Springer  (1997).

\bibitem{new} J.M.F. Castillo, D. Morales, J. Su\'arez, \emph{Derivation of vector valued complex interpolation scales},
J. Math. Anal. Appl. 468 (2018), 461-472.

\bibitem{castmorestrict} J.M.F. Castillo and Y. Moreno,
\emph{Strictly singular quasi-linear maps,} Nonlinear Anal. - TMA 49 (2002) 897--904.

\bibitem{css} J.M.F. Castillo, M. Sim\~{o}es, J. Su\'arez.
\emph{On a Question of Pe\l czy\'nski about Strictly Singular Operators,} Bull. Pol. Acad. Sci. Math. 60 (2012) 27--36.

\bibitem{dss1} J. Flores, F. Hernandez, N. Kalton, P. Tradacete, \emph{Characterizations of strictly singular operators on Banach lattices}, J. London Math. Soc. 79 (2009) 619-630.

\bibitem{dss2} J. Flores, F. Hernandez, Y. Raynaud, \emph{Super strictly singular and cosingular operators and related classes}, J. Op. Theory 67 (2012) 121-152.

\bibitem{dss3} J. Flores, J. L\'opez-Abad, P. Tradacete, \emph{Banach lattice versions of strict singularity}, J. Funct. Anal.  270 (2016) 2715-2731.

\bibitem{jmst} W.B. Johnson, B. Maurey, G. Schechtman, L. Tsafriri, \emph{Symmetric Structures in Banach Spaces}, Mem. Amer. Math. Soc.
217 (1979).


\bibitem{kaltf} N.J. Kalton, \emph{The three-space problem for locally bounded F-spaces}, Compositio Math. 37 (1978) 243-276.

\bibitem{kaltloc} N.J. Kalton, \emph{Locally complemented subspaces and
${\mathcal L}_p$ for $p<1$}, Math. Nachr. 115 (1984) 71-97.

\bibitem{kaltmem}  N.J. Kalton, \emph{Nonlinear commutators in interpolation theory}, Mem. Amer. Math. Soc. 385 (1988).

\bibitem{kaltdiff} N.J. Kalton,
\emph{Differentials of complex interpolation processes for K\"othe function spaces}, Trans. Amer. Math. Soc. 333 (1992) 479--529.

\bibitem{kalt-mon} N. J. Kalton, S Montgomery-Smith.
\emph{ Interpolation of Banach spaces,} in Handbook of the Geometry of Banach spaces vol. 2, W. B.  Johnson and J. Lindenstrauss (eds.) pp. 1131--1175. North-Holland, Amsterdam 2003.

\bibitem{kaltpeck} N. J. Kalton, N. T. Peck.
\emph {Twisted sums of sequence spaces and the three space problem,} Trans. Amer. Math. Soc. 255  (1979) 1--30.

\bibitem{kato} T. Kato.
\emph{ Perturbation theory for nullity, deficiency and other quantities of linear operators,} J. Analyse Math. 8 (1958) 261--322.

\bibitem{lindtzaf} J. Lindenstrauss, L Tzafriri.
\emph {Classical Banach Spaces I, Sequence spaces,} Springer-Verlag (1977).

\bibitem{lindtzaf2} J. Lindenstrauss, L Tzafriri.
\emph {Classical Banach Spaces II, Function spaces,} Springer-Verlag (1979).

\bibitem{M} V. D. Milman.
\emph { Some properties of strictly singular operators,} Funktsional. Anal. i Prilozhen. 3, No 1 (1969) pp. 93--94 (in Russian). English translation: Funct. Anal. Appl. 3, No 1 (1969) pp. 77--78.


\bibitem{M2} V. D. Milman.
\emph{ Spectrum of bounded continuous functions specified on a unit sphere in Banach space,}
Funktsional. Anal. i Prilozhen. 3, No 2 (1969) pp. 67--79 (in Russian). English translation: Funct. Anal. Appl. 3, No 2 (1969) pp. 137--146.

\bibitem{pliv} A. Plichko, \emph{Superstrictly singular and
superstrictly cosingular operators}, in Functional analysis and
its applications, 239--255, North-Holland Math. Stud., 197,
Elsevier, Amsterdam, 2004.

\bibitem{ray} Yves Raynaud, \emph{Ultrapowers of Calder\'on-Lozanovskii interpolation spaces}, Indag. Mathem., N.S., 9 (1) (1998) 65--105.


\bibitem{ryff} J.V. Ryff, \emph{Measure preserving transformations and rearrangements}, J. Math. Anal. Appl. 31 (1970) 449--458.

\bibitem{suakp} J. Su\'arez de la Fuente, \emph{The Kalton-Peck centralizer on $L_p[0,1]$ is not strictly singular},
Proc. Amer. Math. Soc., 141 (2013) 3447 -- 3451




\end{thebibliography}
\end{document}